\newcommand{\Mac}{{\textit {Macaulay2}}}
\newcommand{\tensor}{\otimes}
\newcommand{\sO}{\mathcal{O}}
\newcommand{\cp}{\mathbb{C}}
\newcommand{\qp}{\mathbb{Q}}
\newcommand{\PP}{\mathbb{P}}
\newcommand{\pp}{\mathbb{P}}
\newcommand{\np}{\mathbb{N}}
\newcommand{\zp}{\mathbb{Z}}
\newcommand{\wt}{\widetilde}
\DeclareMathOperator{\chara}{char}
\DeclareMathOperator{\Cliff}{Cliff}
\DeclareMathOperator{\gon}{gon}
\DeclareMathOperator{\Pic}{Pic}
\DeclareMathOperator{\rank}{rank}
\DeclareMathOperator{\T}{T}
\DeclareMathOperator{\h}{H}
\DeclareMathOperator{\Hom}{Hom}
\DeclareMathOperator{\Ann}{Ann}
\DeclareMathOperator{\coker}{coker}
\DeclareMathOperator{\Tor}{Tor}
\DeclareMathOperator{\codim}{codim}
\DeclareMathOperator{\Sing}{Sing}
\DeclareMathOperator{\Sym}{Sym}
\newcommand{\sHom}{\mathcal{H}om}
\theoremstyle{plain}
\newtheorem{theorem}{Theorem}[section]
\newtheorem{lemma}[theorem]{Lemma}
\newtheorem{proposition}[theorem]{Proposition}
\newtheorem{conjecture}[theorem]{Conjecture}
\theoremstyle{definition}
\newtheorem{remark}[theorem]{Remark}
\newtheorem{definition}[theorem]{Definition}
\newtheorem{example}[theorem]{Example}
\newtheorem{algo}[theorem]{Algorithm}
\begin{document}
\begin{frontmatter}

\title{Computational aspects of gonal maps and radical parametrization of curves}

\author[Aut1]{J. Schicho} 
\ead{Josef.Schicho@oeaw.ac.at},
\author[Aut2]{F.-O. Schreyer}
\ead{schreyer@math.uni-sb.de} \and
\author[Aut3]{M. Weimann} 
\ead{weimann@unicaen.fr}

\address[Aut1]{RICAM, Austrian Academy of Sciences, Altenbergerstrasse 69
A-4040 Linz, Austria}

\address[Aut2]{Mathematik und Informatik, Geb\"{a}ude E 2.4
Universit\"{a}t des Saarlandes
D-66123 Saarbr\"{u}cken}

\address[Aut3]{LMNO, Universit\'e de Caen BP 5186, F 14032 Caen Cedex}


\begin{abstract}
We develop in this 	article an algorithm that, given a projective curve $C$, computes a \textit{gonal map}, that is, a finite morphism from $C$ to $\pp^1$ of minimal degree. Our method is based on the computation of scrollar syzygies of canonical curves. We develop an improved version of our algorithm for curves with a unique gonal map and we discuss a characterization of such curves in terms of Betti numbers. Finally, we derive an efficient algorithm for radical parametrization of curves of gonality $\le 4$.
\end{abstract}

\begin{keyword}
Gonal map, canonical curves, scrolls, Betti table, syzygies, special linear series, radical parametrization.
\end{keyword}

\end{frontmatter}

\section{Introduction}
The \textit{gonality} $\gon(C)$ of a projective curve $C$ is the smallest integer $d$ for which there exists a $d:1$ morphism to $\pp^1$.  A \textit{gonal map} is such a morphism of smallest degree. In this article, we adress the problem of computing a gonal map of a given curve $C$. This is an important problem of both computational interest (e.g. encoding curves by small function field extensions, or for radical parametrization) and of theoretical interest. Up to our knowledge, there do not exist such efficient algorithms, except for curves of gonality $\le 3$ \cite{SS} or genus $g\le 6$ \cite{har}.

We work over an arbitrary field, with the main focus on the case of an algebraically closed field of characteristic 0. In case of an non-algebraically closed field $K$, we want to compute the gonality over the algebraically closure and determine in addition the field of definition of the gonal  map. In more algebraic terms we want to find a algebraic field extension $L \supset K$ of the ground field such that the function field $L(C)$ is an algebraic extension of $L(t)=L(\pp^1)$ of degree $d=\gon(C)$. We solve this problem:

\begin{theorem} \label{t1} There exists a deterministic algorithm which, given $C$ an absolutely irreducible projective curve, computes the gonality of $C$ and a gonal map.
\end{theorem}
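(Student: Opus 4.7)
The plan is to reduce the problem to the canonical model of $C$ and extract the gonal map from its scrollar syzygies, testing candidate gonalities in increasing order.

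First I would handle the low-genus and hyperelliptic cases directly. For $g(C)\le 2$ the gonality is at most $2$ and the pencil is constructed immediately (rational, elliptic, or hyperelliptic). In the general case I compute the canonical map of $C$; if it fails to be an embedding then $C$ is hyperelliptic and composing with the normalization of its image (a rational normal curve) yields the $g^1_2$. Otherwise I may assume $C\subset\pp^{g-1}$ is canonically embedded, which is the setting where scroll theory applies.

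Next I exploit the link between the gonality $d=\gon(C)$ and the linear syzygies of the canonical ideal $I_C$. A pencil $|D|$ of degree $d$ on $C$ sweeps out a rational normal scroll $X\supset C$ in $\pp^{g-1}$ of codimension $d-1$, and the Eagon--Northcott resolution of $I_X$ embeds into the minimal free resolution of $I_C$; the resulting \emph{scrollar syzygies} sit in a definite, predictable position of the Betti table. Algorithmically I compute a minimal free resolution of $I_C$ via a Gr\"obner basis and Schreyer's method, and for increasing $d=3,4,\ldots$ I inspect the $(d-1)$-linear strand to test whether a scrollar syzygy exists. The smallest such $d$ is then the gonality. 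Having located such a syzygy, the ambient scroll $X$ is reconstructed by taking $2\times 2$ minors of the catalecticant matrix extracted from it, a routine linear-algebra step; the ruling $X\to\pp^1$ restricted to $C$ is the sought gonal map.

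Finally the field of definition is controlled because all computations (Gr\"obner basis, resolution, catalecticant minors) are Galois-equivariant over $K$, and the parameter space of rulings of $X$ is a Brauer--Severi variety that becomes $\pp^1$ over a minimal algebraic extension $L\supset K$, which the algorithm extracts by factoring a polynomial of degree at most $d$.

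The main obstacle is that $C$ may admit several distinct pencils of degree $d$, so the $(d-1)$-linear strand may be large and contain syzygies that are not scrollar, or scrollar syzygies coming from different pencils. Distinguishing them requires verifying that the associated minors cut out an irreducible scroll of the expected dimension and, when this fails, iterating the test over the components of the corresponding syzygy scheme. Ensuring that this search terminates in a bounded number of steps and certifies minimality of the output degree is the heart of the correctness proof; this is precisely the subtlety which the improved algorithm for curves with a unique gonal map, mentioned in the introduction, is designed to circumvent.
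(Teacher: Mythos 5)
Your proposal follows essentially the same route as the paper: pass to the canonical model, locate a scrollar syzygy in the extremal position of the linear strand by searching over candidate gonalities, reconstruct the $2\times(p+2)$ $1$-generic matrix of the ambient scroll, and take the ratio of the entries of a column as the gonal map. Two points in your account diverge from what actually makes the argument work. First, the ``main obstacle'' you identify is not the real one: by von Bothmer's theorem a linear syzygy of the minimal rank $p+2$ at the extremal homological degree automatically cuts out a rational normal scroll ($1$-genericity follows from irreducibility of $C$), and \emph{any} such syzygy yields a gonal pencil --- the completeness and base-point-freeness of the induced $g^1_d$ are forced by the extremality of $p$ (Proposition~\ref{ScrollarSyzygy}), so there is nothing to distinguish when several pencils coexist. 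The actual algorithmic content you leave unspecified is how to \emph{test existence} of a scrollar syzygy: the paper realizes the locus of rank-$(p+2)$ syzygies as a determinantal scheme $Y_p\subset\pp(\Tor_p^S(S_C,K)_{p+1})$ cut out by the $(p+2)\times(p+2)$ minors of $\Psi_p(y)$, and the test is an (expensive) emptiness check for that scheme. Second, your description of the field of definition is incorrect: the extension $L\supset K$ does not come from a Brauer--Severi variety of rulings (once the syzygy is fixed, the matrix $\varphi$ and hence the map to $\pp^1$ are defined over the field of definition of the syzygy), but from finding a rational point on $Y_p$, and its degree is not bounded by $d$ --- for a general curve of genus $6$ one needs a degree-$5$ extension to pick one of the five $g^1_4$'s.
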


The algorithm we propose here is based on the computation of scrollar syzygies of the canonical model of $C$, a concept introduced by von Bothmer \cite{both}. However with current computational power, this algorithm rarely works in practice. 

\paragraph*{Goneric curves.} One might expect that there is a much more effective algorithm in case the gonal map is unique up to an automorphisms of $\pp^1$. Indeed we will describe such an algorithm for curves which satisfy an even stronger condition we call \textit{goneric}. 

\begin{theorem}\label{t2}
There exists a deterministic algorithm that, given a projective curve, outputs ``goneric'' and computes the gonal map or outputs ``non goneric''.
\end{theorem}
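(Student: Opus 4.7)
The plan is to detect gonericity through a numerical test on the Betti table of the canonical model of $C$, and then to reconstruct the gonal map from the scrollar syzygies whenever the test succeeds. First I would compute the canonical embedding $C \hookrightarrow \PP^{g-1}$, a routine computation once $H^0(C,\omega_C)$ has been obtained by standard Gr\"obner techniques from the defining equations of $C$. Then compute the minimal free resolution of the homogeneous ideal $I_C \subset k[x_0,\dots,x_{g-1}]$ and read off its Betti table.

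Next, compare the Betti table against the numerical characterization of goneric curves promised in the abstract; if the test fails, return ``non goneric''. If it passes, extract the scrollar syzygies from the appropriate sub-block of the resolution. Following von Bothmer, these syzygies assemble into a matrix of linear forms whose $2 \times 2$ minors cut out a rational normal scroll $X \subset \PP^{g-1}$ of dimension $d-1$ containing $C$, where $d = \gon(C)$. The pencil of $\PP^{d-2}$-rulings of $X$, restricted to $C$, realizes the desired gonal map $\varphi \colon C \to \PP^1$. Concerning the field of definition: gonericity forces $X$ to be unique and hence Galois-invariant, so $X$ and $\varphi$ descend to the base field $K$; a finite extension $L/K$ may be needed only to parametrize the ruling, and can be extracted by linear algebra.

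The main obstacle is the numerical test itself: one must prove that some precise condition on the Betti table is equivalent to gonericity, so that the algorithm has neither false positives nor false negatives. The necessary direction should follow from von Bothmer's scrollar syzygy theory (a goneric curve contributes exactly one block of such syzygies, of the expected dimension). The sufficient direction is the subtler point, and will have to be established in the body of the paper: one must show that the presence of a unique scrollar syzygy block of minimal size forces the existence of a unique scroll, which in turn forces uniqueness and existence of a single gonal map of the expected degree. Once this equivalence is in place, the algorithm itself is a straightforward assembly of standard computer-algebra routines (resolution, minors, kernel of a matrix), hence deterministic.
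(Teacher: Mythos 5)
There is a genuine gap, and it sits exactly where you placed your hopes: you defer the whole theorem to ``some precise condition on the Betti table \ldots equivalent to gonericity,'' to be ``established in the body of the paper.'' The paper does \emph{not} establish such an equivalence --- that is precisely Conjecture \ref{conj1}, which is left open. The difficulty is that Definition \ref{dgoneric} has two conditions, and condition (1), $d=\ell+2$, refers to the actual gonality, which cannot be read off the Betti table (doing so would require, at least, Green's conjecture). So an algorithm that merely inspects the Betti table and then declares ``goneric'' has no proof that it avoids false positives; as written, your argument is conditional on an open conjecture.

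The paper's Algorithm \ref{alg4} is built to sidestep this: after the (necessary) Betti-number test $\beta_{p,p+1}=p$ with $p=g-2-\ell$, it performs a \emph{constructive verification}. By Proposition \ref{adjOnScroll}, if $C$ were goneric its last linear syzygy module would coincide with that of the gonal scroll, and the space of scrollar syzygies would be a rational normal curve $Y_p\subset\pp^{p-1}$ of degree $p-1$, computable cheaply as $\Ann(\coker \Psi_p)$ (an annihilator, not an ideal of maximal minors --- this is the entire efficiency gain over the general Algorithm \ref{alg2}, and you do not mention it). The algorithm checks the dimension and degree of $V(\Ann(\coker\Psi_p))$; if the check fails the curve is provably not goneric, and if it succeeds one finds an explicit point on the rational normal curve (by iterated adjunction down to a line or conic), runs Algorithm \ref{alg1}, and actually exhibits a $g^1_{\ell+2}$. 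Combined with the lower bound $\gon(C)\ge\ell+2$ from Theorem \ref{GreenLazarsfeld}, this \emph{certifies} $d=\ell+2$ unconditionally. In short: where you need an equivalence of a numerical test with gonericity, the paper replaces the ``sufficient direction'' by an explicit construction-and-certification step. Your necessary direction, and your remarks on fields of definition, are essentially fine.
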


The precise definition of goneric curves involves graded Betti numbers and will be given later in Section \ref{S4}. Although a general curve of genus $g$ is not goneric, we expect general $d$-gonal curves of non maximal gonality to be goneric. We will adress a precise conjectural characterization of goneric curves in terms of special linear series. 

\paragraph*{Radical parametrization.} Detecting curves of gonality $\le 4$, and computing a gonal map, is of particular interest, since by Cardano's formula  these curves can be parametrized with radicals, \textit{i.e} by using $\sqrt{}$, $\sqrt[3]{}$. We prove here

\begin{theorem} \label{t3} There is a deterministic algorithm which decides whether a curve $C$ has gonality $\gon(C) \le 4$ and computes a rational radical parametization for these curves in case $\chara(K) \not=2,3$.
\end{theorem}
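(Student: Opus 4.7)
The plan is to reduce Theorem~\ref{t3} to the algorithm of Theorem~\ref{t1} combined with the classical solvability by radicals of polynomial equations of degree at most four.

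First I would apply the algorithm of Theorem~\ref{t1} to the input curve $C$. It deterministically returns the gonality $d=\gon(C)$ together with a gonal map $\pi\colon C\to\pp^1$ defined over some explicit finite algebraic extension $L\supset K$. Comparing $d$ with $4$ decides whether $\gon(C)\le 4$; if $d\ge 5$, stop and output this. Otherwise $\pi$ realizes the function field extension $L(C)\supset L(t)$ of degree $d\le 4$, where $t$ is an affine coordinate on $\pp^1$.

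Next I would use standard Gr\"obner basis techniques to compute a primitive element $\alpha\in L(C)$ over $L(t)$ together with its minimal polynomial $P(t,y)\in L(t)[y]$, of degree $d$. Since $\chara(K)\ne 2,3$, the polynomial equation $P(t,y)=0$ admits a closed-form solution by iterated radicals: the quadratic formula for $d=2$, Cardano's formula for $d=3$ (a square root and a cube root), and Ferrari's reduction to the cubic resolvent for $d=4$ (a cube root together with further square roots). Expressing this radical solution for $\alpha$ and substituting into rational representatives of a set of generators of the coordinate ring of $C$ yields the desired rational radical parametrization $\varphi\colon\pp^1\dashrightarrow C$, which satisfies $\pi\circ\varphi=\mathrm{id}$ generically on $\pp^1$.

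The principal obstacle is the efficiency of the first step: the algorithm of Theorem~\ref{t1}, based on scrollar syzygies of canonical models, is expensive and, as noted above, rarely works in practice. For this specific application one should first test the cheaper structural conditions, namely gonality $\le 3$ by the method of \cite{SS} and the goneric case of Theorem~\ref{t2}; only if none applies does one fall back on Theorem~\ref{t1}. The remaining ingredients (primitive element computation and symbolic Cardano/Ferrari) are elementary, and the restriction $\chara(K)\ne 2,3$ is precisely what is needed for those classical formulas to be valid.
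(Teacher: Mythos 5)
Your argument is logically correct, and the paper itself concedes the point: it states explicitly that Theorem~\ref{t3} is a direct consequence of Theorem~\ref{t1}, since a degree $\le 4$ map $C\to\pp^1$ is invertible by radicals when $\chara(K)\neq 2,3$ (the paper cites \cite{Sweedler} for this inversion step rather than spelling out Cardano/Ferrari, but the content is the same). However, the paper's actual proof in Section~\ref{S5} takes a genuinely different route, and the difference is the whole point of that section. Instead of computing the gonality first via the expensive scrollar-syzygy search, the paper decides $\gon(C)\le 4$ cheaply: by Theorem~\ref{GreenLazarsfeld} and Theorem~\ref{GonalityClifford}, a curve with linear colength $\ell(C)>2$ has gonality $>4$, so only the first two syzygy modules of the canonical model are needed. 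For the remaining case $\ell(C)=2$ it invokes the constructive classification of Theorem~\ref{pIX} (a constructive version of Schreyer's theorem \cite{S2}): either $\beta_{2,4}=g-4$ and the curve is goneric, so Algorithm~\ref{alg4} applies, or $\beta_{2,4}=\binom{g-2}{2}-1$ and the curve lies on a unique degree $g-1$ surface $Z$ (a possibly singular Del Pezzo surface or an elliptic cone) recoverable from the kernel of $B_3^t$, from which a $g^1_4$ is extracted via a conic fibration or the $2{:}1$ cover of the elliptic curve. Your fallback plan — test gonality $\le 3$ by \cite{SS} and gonericity by Theorem~\ref{t2}, then revert to Theorem~\ref{t1} — misses exactly these non-goneric tetragonal cases, which the paper handles without ever touching the determinantal locus of scrollar syzygies. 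In short: your proof establishes the theorem as stated with less machinery, but the paper's proof delivers the efficient algorithm that the theorem is really advertising.
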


Such an algorithm is well-known for rational and hyperelliptic curves. It is solved for $3$-gonal curves in \cite{SS} for $\chara(K)=0$ and for curves of genus $\le 6$ in \cite{har}, by using similar ideas as those developed here. Although Theorem \ref{t3} is a direct consequence of Theorem \ref{t1}, the algorithm we provide here is much more efficient.

By a dimension count Zariski proved that for a general curve $C$ of genus $g>6$, the Galois group of the hull of $\cp(C) \supset \cp(f)$ is not solvable for every $f \in \cp(C) \setminus \cp$, i.e.  $C$ has no \textit{rational radical parametrization}. This does not exclude the possibility that there is a \textit{algebraic radical parametization}, i.e. a surjection $C' \to C$ from a curve $C'$ with a rational radical parametrization. These two concepts are really different, as shown in \cite{PS}. Zariski suggested that the general curve $C$ of a fixed large enough genus should not be algebraically parametrized by radicals. Up to our knowledge, there exists no example of a curve with no algebraic radical parametization.

\paragraph*{Idea of proof.} Both Theorems \ref{t1}, \ref{t2} and \ref{t3} are based on the study of the syzygies of canonical curves. Since gonality is a birational invariant and since there are efficient algorithms to compute the normalization map, there is no less to suppose that $C$ is a smooth and absolutely irreducible projective curve. Then we consider the canonical map 
$$
C \hookrightarrow  \pp^{g-1}=\pp(\Gamma(C,\omega_C))
$$
determined by the canonical sheaf of $C$ (it's an embedding if and only if the gonality $\ge 3$). 
Let $S =K[x_0, \ldots, x_{g-1}]$ denotes the homogeneous coordinate ring of $\PP^{g-1}$, and let $S_C=S/I_C$ denotes the homogeneous coordinate ring of $C$.
There are effective algorithms to compute $S_C$, say by computing generators of the homogeneous ideal $I_C$ of $C$.

A $d$-gonal canonical curve is easily seen to be contained in a $(d-1)$-dimensional rational normal scroll $X$, whose projection map to $\pp^1$ restricted to $C$ induces a $d$-gonal map $f:C\to \pp^1$. So there is a commutative diagram
\begin{equation}\label{eval}
\begin{array}{ccccc}
 C & & \subset & & X \\
 {f} & \searrow & \quad &  \swarrow  & \\
 &  & \pp^1 & & 
\end{array}
\end{equation}
(the right hand arrow might not be a morphism if $X$ is a cone). The minimal free resolution of the coordinate homogeneous ring $S_X$ is given by an Eagon Northcott complex:
Consider $f^* \sO_{\PP^1}(1) \cong \sO_C(D)$. Then $h^0(\sO_C(D))=2$ because in case of $h^0(\sO_C(D)) \ge 3$ we could pass to the pencil
$|D -p |$ and obtain  a map of smaller gonality. By Riemann-Roch gives $h^0(\omega_C(-D))=g-d+1$.
Thus the multiplication tensor 
$$
\h^0 (\sO_C(D)) \tensor \h^0 (\omega_C(-D)) \to \h^0 (\omega_C) \cong \h^0( \PP^{g-1}, \sO(1))
$$
defines a $2 \times (g-d+1)$ matrix $\varphi$ of linear forms, whose $2\times 2$ minors vanish on $C$. 
The homogeneous ideal $I_X$ of the rational normal scroll is defined by these minors, and the minimal free resolution $E_X$ is the Eagon-Northcott complex associated to $\varphi$. 

The central idea is to use syzygies to extract the homogeneous ideal of $X$ from that of $C$. 
In general, this can be done by looking for a linear syzygy of $C$ of minimal rank thanks to a theorem of von Bothmer \cite{both}, in which case we get directly the matrix $\varphi$ and the projection $X\to \pp^1$. This approach has a prohibitive cost in general. \textit{When the curve is goneric}, we can compute $I_X$ directly as the annihilator of the cokernel of some explicit submatrix of syzygies $I_C$. In such a case, we compute the projection $X\to \pp^1$ thanks to a recognition method based on the Lie Algebra of $Aut(X)$ developed for the parametrization of some rational surfaces in \cite{schi}. The gonal map then follows explicitely. When the gonality is $\le 4$, we can avoid Bothmer's approach to treat the exceptional non goneric cases thanks to a theorem of Schreyer \cite{S2} that allows to replace the scroll and its ruling in the diagramm below with either a Del Pezzo surface or an elliptic cone with a $\pp^1$-fibration.

\paragraph*{Organisation.} In Section \ref{S2}, we study projective graded resolution of canonical curves. We review well known facts about the relations beetween Betti numbers and special linear series and we recall the main results of Brill-Noether theory. In Section \ref{S3}, we explain the relations between scrolls, gonal maps and syzygies and we prove theorem \ref{t1}. In Section \ref{S4}, we introduce goneric curves. We first prove Theorem \ref{t2}. Then we propose a conjecture about a  characterization of goneric curves in terms of special linear series. In Section \ref{S5}, we focus on radical parametrization of tetragonal curves and we prove Theorem \ref{t3}. Our results and algorithms are illustrated by various examples, with related links to various \textit{Macaulay2} or \textit{Magma} packages available on the webpages of the authors.

This research was partially supported by the Austrian Science Fund (FWF): P22766-N18.

\section{Betti numbers of canonical curves and special linear systems}\label{S2}

In this section, we review the basic facts about the relations beetween projective resolution of canonical curves and special linear series. 

\subsection{Canonical embedding}
Let $C$ be a smooth irreducible curve with genus $g\ge 2$ and let $\omega_C$ be its canonical line bundle. The \textit{canonical map} 
$$
C \rightarrow  \pp^{g-1}=\pp(\Gamma(C,\omega_C))
$$
determined by $\omega_C$ is an embedding if and only if $C$ is non hyperelliptic. There are efficient algorithms to compute the homogeneous ideal of a smooth canonical model of any irreducible non hyperelliptic curve and we will assume from now that $C$ is a \textit{canonical curve} $C\subset \pp^{g-1}$ given by its homogeneous ideal 
$$
I_C\subset S:=k[x_0,\ldots,x_{g-1}]
$$ 
in the homogeneous coordinate ring $S$ of $\pp^{g-1}$. By Max Noether theorem \cite{Noet}, the homogeneous coordinate ring $S_C:=S/I_C$ of $C$ is projectively normal
$$
S_C\simeq \bigoplus_{n\ge 0} \h^0 \mathcal{O}_C(n)
$$
(isomorphic to the canonical ring) and $S_C$ is a Cohen-Macaulay $S$-module.

\subsection{Projective resolution and Betti numbers}\label{ss2.1}

The \textit{minimal resolution} of $S_C$  
\begin{equation}\label{resolution}
(F_{\bullet}):\quad 0\leftarrow S_C \leftarrow F_0 \leftarrow \cdots \leftarrow F_i \stackrel{f_i}\leftarrow F_{i+1} \leftarrow \cdots \leftarrow F_{g-2} \leftarrow 0 
\end{equation}
is an \textit{exact complexe} of graded free $S$-modules (the syzygies modules)
$$
F_i=\bigoplus_{j\in \np} S(-i-j)^{\beta_{i,i+j}},
$$
where $S(-i)$ denotes the module generated by polynomials of degree $i$. Since $C$ is Cohen-Macaulay, the length of the resolution is $g-2$ thanks to the Auslander-Buchsbaum formula \cite[Theorem A2.15]{Eis}. The exponents $\beta_{i,i+j}$ are called the \textit{graded Betti numbers} of $C$. We have equality
$$
\beta_{i,j}=\dim \Tor_i^S(S_C,K)_j
$$
\cite[Proposition 1.7]{Eis}. In particular, Betti numbers coincide with the dimension of graded pieces of the homology of the Koszul resolution of $K$ tensored by $S_C$. The Betti numbers obey to the symmetric property  
$$
\beta_{ij}=\beta_{g-2-j,g-2-i}
$$
inherent to the self-duality property 
$$
F_{\bullet}\simeq \Hom_S(F_{g-2-\bullet}, S(-g-1))
$$
of minimal resolution of canonical curves \cite[Proposition 9.5]{Eis}. Since the resolution is minimal, there are no degree $0$ syzygies and $\beta_{0,k}= 0$ except for $\beta_{0,0}=1$. By symmetry, this implies that $\beta_{j,k}=0$ for all $j\ge 3$ and all $k$, except $\beta_{3,g-2}=\beta_{0,0}=1$. 
In particular, canonical curves are generated by quadrics and cubics for $g\ge 3$ and admit only linear and quadratic syzygies
$$
F_i=S(-i-1)^{\beta_{i,i+1}}\oplus S(-i-2)^{\beta_{i,i+2}},
$$
except for the last syzygy module $F_{g-2}=S(-g-1)$. This is part of Petri's theorem, which asserts moreover that $C$ is generated only by quadrics ($\beta_{1,3}= 0$), except if $C$ either hyperelliptic, trigonal, or a smooth planar quintic canonically embedded.

We can collect all the Betti numbers in an array called the \textit{Betti table} of $C$. By the previous proposition, the Betti table of canonical curves has the following shape 
\begin{equation*}
\begin{array}{cccccccc}
1\quad   &  -  &  -  &     &    -   & -  & \quad -   \\ 
-\quad  & \quad \beta_{12} \quad & \quad \beta_{23} \quad & \quad \cdots \quad &  \beta_{g-4,g-3} & \beta_{g-3,g-2} & \quad -    \\ 
-\quad  &  \beta_{13}  &  \beta_{24}  & \quad \cdots \quad   &  \quad  \beta_{g-4,g-2} \quad &  \beta_{g-3,g-1}  & \quad -  \\
-\quad  &  -  & -  &     &   -     &   - &  \quad 1 \\
\end{array}
\end{equation*}
where $\beta_{i,i+1}=\beta_{g-i-2,g-i}$ and where $-$ stands for vanishing Betti numbers.

We can compute easily some of the Betti numbers: from the Hilbert function of canonical curves \cite[Corollary 9.4]{Eis} and its relation with Betti numbers \cite[Corollary 1.10]{Eis}, we get some explicit formulas for the difference
$$
\beta_{i,i+1}-\beta_{i-1,i+1}=\binom{g-2}{i-2}-(g-2)\binom{g-2}{i-1}+(g-2)\binom{g-2}{i}-\binom{g-2}{i+1}
$$
in terms of $i$ and $g$. In particular, the Betti numbers $\beta_{i,i+1}$ can be computed explicitely in terms on $g$ and $i$ for all $i\ge 1$ such that $\beta_{i-1,i+1}=0$. For instance, the number $\beta_{1,2}$ of linearly independant quadrics vanishing on $C$ is always equal to   
$$
\beta_{1,2}=\binom{g-2}{2}.
$$
(recall that $g\ge 3$). It's thus natural to introduce the following invariant:
\begin{definition}
The \textit{linear colenght} $\ell(C)$ of $C$ is the smallest integer $i$ such that the Betti number $\beta_{g-2-i,g-1-i}=\beta_{i,i+2}$ is non zero.
\end{definition}
For a fixed $g$, the linear colength $\ell$ can take different values. For instance, a non hyperelliptic curve of genus $6$ can have Betti tables
\[ \begin{array}{ccccccccccc}
  1 &  - &  - &  - &  -   &\qquad \qquad\qquad\qquad&	   1 &  - &  - &  - &  - \\
  - & 6 & 5 & - &  -  &\qquad\qquad {\rm or} \qquad\qquad &  - & 6 & 8 & 3 &  - \\
  - &  - &  5 &  6 & - 	  &\qquad \qquad\qquad\qquad&	   - &  3 & 8 &  6 & - \\
  - &  - &  - &  - &  1    &\qquad \qquad\qquad	\qquad &   - &  - &  - &  - &  1 \\
\end{array}  \]
For fixed $g$ and $\ell$, the Betti numbers $\beta_{i,i+1}$ for $i> \ell$ can take different values too. First examples are curves with $g=7$ and $\ell=2$ which can have Betti tables 
\[ \begin{array}{ccccccccccccc}
  1 &  - &  - &  - &  - & -  	& \qquad \qquad\qquad\qquad	&	   
  1 &  - &  - &  - &  - & - \\
  - & 10 & 16 & 3 &  -  & - 		& \qquad\qquad {\rm or} \qquad\qquad &
  - & 10 & 16 & 9  &  -  & - \\
  - &  - &  3 &  16 & 10 & -	  	&  \qquad \qquad\qquad\qquad   &	 
  - & - &  9 & 16 & 10   & - \\
  - &  - & - & - &  -  & 1  &	\qquad \qquad\qquad\qquad &   
  - &  - &  - &  - &  -  & 1 \\
\end{array}  \]
so that $\beta_{\ell+1,\ell+2}=3$ or $9$. See for instance \cite{S1} for all possible Betti tables for curves of genus $\le 8$, \cite{Sag} for genus $g=9$. In general, the shape of the Betti tables is related to the intrinsec geometry of the curve, especially to the existence of special linear series of small degrees, as the gonal pencil we're interested in.

\subsection{Special linear series.}\label{ss2.2} 

An effective divisor $D$ is \textit{special} if $h^1(D):=h^1(\mathcal{O}_C(D))>0$. Clifford's theorem asserts that the dimension of complete linear systems attached to a special divisor can not be too big when compared to the degree. Namely, any effective special divisor satisfies the inequality
$$
g+1-(h^0(D)+h^1(D))=\deg(D)- 2(h^0(D)-1)\ge 0
$$
(first equality is Riemann-Roch) and equality holds if and only if $D$ is the zero divisor, the canonical divisor or a multiple of a pencil of degree $2$ on a hyperelliptic curve. Hence the natural well-know definition:

\begin{definition}
The \textit{Clifford index} of a special divisor $D$ on $C$ is defined as
$$\Cliff(D):=\deg(D)- 2(h^0(D)-1).$$ 
The \textit{Clifford index} $\Cliff(C)$ of $C$ is the smallest Clifford index of a divisor $D$ of $C$ which satisfies $h^i(D)\ge 2$ for $i=0,1$, with the convention $\Cliff(C)=0$ for curves of genus $g\le 2$ .
\end{definition}

By Serre duality, we have that $\Cliff(D)=\Cliff(K-D)$ where $K$ is a canonical divisor on $C$. The conditions $h^i(D)\ge 2$ are here to exclude trivial divisors as the zero divisor or the canonical divisor.

In all of the sequel, we will use the standard notation $g^r_d$ for a linear system of projective dimension $r$ and degree $d$. Hence, the Clifford index is the smallest integer $c$ such that $C$ has a \textit{complete} $g^n_{2n+c}$ for some positive integer $n$. 

Green conjectured that the Clifford index is related to some vanishing properties of the Betti numbers, namely to the linear colength of $C$.
\begin{conjecture}[\textbf{Green's conjecture} \cite{GL}]\label{GreenConj} Assume that $\chara(K)=0$. Let $C$ be a smooth canonical curve. Then $\ell(C)=\Cliff(C)$.
\end{conjecture}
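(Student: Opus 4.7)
The statement is Green's conjecture, a central open problem in the syzygy theory of algebraic curves; I outline the strategy I would pursue, emphasizing what is known and where the obstruction to a complete proof lies. The plan is to split the asserted equality $\ell(C)=\Cliff(C)$ into its two inequalities and treat them by very different techniques.

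The inequality $\ell(C) \le \Cliff(C)$ is the elementary half, due to Green and Lazarsfeld. First I would pick a complete special linear series $|D|$ of type $g^n_{2n+c}$ computing $c=\Cliff(C)$ and use the residual series $|\omega_C-D|$ to construct, via standard Koszul cohomology, an explicit non-zero class in $K_{g-c-2,1}(C,\omega_C)$. Translating through the duality $\beta_{i,i+2}=\beta_{g-2-i,g-i}$ of the canonical Betti table recorded in Section \ref{ss2.1}, this exhibits a non-trivial linear syzygy at position $\ell \le c$, which is the desired bound.

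For the converse $\ell(C) \ge \Cliff(C)$, which is the hard content of the conjecture, the plan is to prove the vanishing $K_{p,1}(C,\omega_C)=0$ for every $p \le g-\Cliff(C)-2$. The most successful route is Voisin's $K3$-deformation method: realize $C$ as a hyperplane section of a polarized $K3$ surface $S$ of the appropriate genus with $\Pic(S)=\zp\cdot [C]$, reduce by semi-continuity to this distinguished curve, and reinterpret the Koszul groups through the geometry of the Hilbert scheme $S^{[m]}$ of points on $S$. The key step is to exhibit an explicit $0$-dimensional subscheme of $S$ whose ideal sheaf forces the relevant Koszul differential to be surjective; a subsequent refinement due to Aprodu then upgrades the conclusion from a generic $d$-gonal curve to every $d$-gonal curve of non-maximal gonality, by exploiting the secant loci to the Brill--Noether varieties.

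The main obstacle, and the reason the conjecture remains open in general, is that the $K3$-deformation argument cannot reach every curve: those of maximal gonality $\gon(C)=\lfloor (g+3)/2 \rfloor$ lying outside any polarized $K3$ family escape the scope of this strategy, and in positive characteristic both the Kodaira-type vanishing theorems on $S^{[m]}$ and the duality underlying Koszul cohomology can fail. For the purposes of the present paper the conjecture serves only as a heuristic predicting the shape of the Betti table of a given $d$-gonal curve and guiding the design of the algorithms in Sections \ref{S3}--\ref{S5}; in each concrete example we verify the predicted Betti numbers by a direct Gröbner basis computation rather than invoking the general statement.
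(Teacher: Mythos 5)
This statement is an open conjecture, and the paper supplies no proof of it: it only records the one known inequality $\ell(C)\le\Cliff(C)$ (Theorem \ref{GreenLazarsfeld}) and the list of special cases in which equality is established (Theorem \ref{Voisin}). Your proposal correctly recognizes this, and your account of the two halves --- the Green--Lazarsfeld construction of a nonzero Koszul class from a divisor computing the Clifford index for the easy inequality, and the Voisin $K3$-deformation method with Aprodu's refinement for the hard vanishing direction --- matches precisely the partial results the paper cites, so there is nothing to fault beyond the fact that neither you nor the paper claims a complete proof.
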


The conjecture does not hold for finite characteristic. The first example provide general curves of genus $g=7$ over a field with $\chara(K)=2$, \cite{S1}.
One direction has been proved by Green and Lazarsfeld.
\begin{theorem}\label{GreenLazarsfeld}\cite[Appendix]{GL}
Inequality $\ell(C)\le \Cliff(C)$ holds over any field.
\end{theorem}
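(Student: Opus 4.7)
The inequality can be recast in Koszul cohomology. Via the identification $\beta_{p, p+q}(S_C) = \dim K_{p, q}(C, \omega_C)$ and the self-duality of the canonical resolution (which in particular gives $\beta_{i, i+2} = \beta_{g-2-i,\, g-1-i}$), the claim $\ell(C) \le c := \Cliff(C)$ is equivalent to producing a non-zero element of the linear-strand Koszul group $K_{g-2-c,\,1}(C, \omega_C)$. The plan is to build such a class from a line bundle realising the Clifford index.

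First I would pick a line bundle $L$ on $C$ with $\Cliff(L) = c$, $h^0(L) = r+1 \ge 2$, and $h^0(\omega_C \otimes L^{-1}) = s+1 \ge 2$; by Riemann-Roch, $r + s = g - 1 - c$. Replacing $L$ by its moving part if necessary, both $|L|$ and $|\omega_C \otimes L^{-1}|$ may be assumed to be base-point-free complete linear series. The pair $(L, \omega_C \otimes L^{-1})$ induces a multiplication tensor
$$\mu : H^0(L) \otimes H^0(\omega_C \otimes L^{-1}) \to H^0(\omega_C)$$
which, exactly as in diagram~(\ref{eval}), yields a matrix of linear forms on $\pp^{g-1}$ whose $2 \times 2$ minors vanish on $C$.

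Next I would invoke the Green-Lazarsfeld multiplication lemma: given line bundles $L_1, L_2$ on $C$ with $h^0(L_i) = r_i + 1 \ge 2$ and sufficient positivity of the multiplication $H^0(L_1) \otimes H^0(L_2) \to H^0(L_1 \otimes L_2)$, one can produce an explicit non-zero class in $K_{r_1 + r_2 - 1,\,1}(C, L_1 \otimes L_2)$. Applied to $L_1 = L$ and $L_2 = \omega_C \otimes L^{-1}$, this yields a non-zero element of $K_{r+s-1,\,1}(C, \omega_C) = K_{g-2-c,\,1}(C, \omega_C)$, which by the duality above translates into $\beta_{c,\, c+2}(S_C) \ne 0$, hence $\ell(C) \le c$.

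The hard step is the multiplication lemma itself. Its proof constructs the Koszul cycle by wedging bases $u_0, \ldots, u_r$ of $H^0(L)$ and $v_0, \ldots, v_s$ of $H^0(\omega_C \otimes L^{-1})$ and contracting against $\mu$; verifying that the resulting cycle is not a Koszul boundary is the main technical obstacle and requires a delicate rank computation. The essential input is the completeness of $|L|$ and $|\omega_C \otimes L^{-1}|$: any hypothetical Koszul boundary would force unexpected linear relations among the products $u_i v_j$, contradicting the rank of $\mu$ prescribed by completeness. Since the entire argument stays at the level of multilinear algebra of global sections, with no division by integers, it is characteristic-free, matching the ``over any field'' clause of the theorem.
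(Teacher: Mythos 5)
The paper does not actually prove this statement: it is quoted directly from the appendix of \cite{GL}, so there is no in-paper argument to compare against. That said, your outline is precisely the argument of the cited appendix: choose a line bundle $L$ computing $\Cliff(C)=c$ with $h^0(L)=r+1\ge 2$ and $h^0(\omega_C\otimes L^{-1})=s+1\ge 2$, note $r+s=g-1-c$ by Riemann--Roch, and apply the Green--Lazarsfeld nonvanishing theorem to the factorisation $\omega_C=L\otimes(\omega_C\otimes L^{-1})$ to obtain $K_{r+s-1,1}(C,\omega_C)=K_{g-2-c,1}(C,\omega_C)\neq 0$, i.e.\ $\beta_{g-2-c,g-1-c}=\beta_{c,c+2}\neq 0$, hence $\ell(C)\le c$. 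Your index bookkeeping and the use of the self-duality of the canonical resolution are correct.

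The gap is that the entire mathematical content of the theorem is concentrated in the ``multiplication lemma'' (the Green--Lazarsfeld nonvanishing theorem), which you state but do not prove, and the one sentence you offer to justify it points at the wrong hypothesis. Completeness of $|L|$ and $|\omega_C\otimes L^{-1}|$ is not what makes the Koszul cycle survive; it is needed only for the numerology $r+s=g-1-c$ (and is automatic anyway: a bundle computing $\Cliff(C)$ is base-point-free, since removing a base point would lower the Clifford index, so ``replacing $L$ by its moving part'' is vacuous). The nonvanishing theorem holds for any decomposition $N=N_1\otimes N_2$ with $h^0(N_i)\ge 2$ on an integral projective variety. What its proof actually uses is that, for suitable bases $u_0,\dots,u_r$ of $H^0(L)$ and $v_0,\dots,v_s$ of $H^0(\omega_C\otimes L^{-1})$, the $r+s+1$ products $u_iv_0$ and $u_0v_j$ are linearly independent in $H^0(\omega_C)$ (a consequence of the integrality of $C$), together with an explicit cycle built from exactly these products and a direct coefficient computation showing it is not the Koszul boundary of any element of $\wedge^{r+s}H^0(\omega_C)$; the relation $(u_iv_0)(u_0v_j)=(u_0v_0)(u_iv_j)$ in $H^0(\omega_C^{2})$ is what makes it a cycle in the first place. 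Your appeal to ``the rank of $\mu$ prescribed by completeness'' does not substitute for this computation, so as written the proposal is a correct reduction to the cited lemma rather than a proof. The observation that the construction is characteristic-free is correct and is why the statement holds over any field.
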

Green's conjecture received a lot of attention. Although it's still open, it is now solved for many curves:

\begin{theorem}\label{Voisin} Assume that $\chara(K)=0$. Then equality $\ell(C)=\Cliff(C)$ holds for:
\begin{enumerate}
\item General curves \cite{V2,V3}.
\item General $d$-gonal curves of any gonality \cite{Ap}.
\item Curves of odd genus with $\ell(C)=(g-1)/2$ \cite{HR,V3}.
\item Smooth curves on any $K_3$ surface \cite{AF}.
\item Curves of linear colenght $\ell(C)\le 2$ \cite{Noet,Petri,S2}. This holds for any characteristic, except if $\chara(K)=2$ and $g= 7$.
\end{enumerate}
\end{theorem}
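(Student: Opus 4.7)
The theorem aggregates a collection of deep results rather than admitting a single unified proof, so I would structure my attack by observing that one half of each equality, namely $\ell(C)\le \Cliff(C)$, is already recorded as Theorem~\ref{GreenLazarsfeld} and works in every characteristic. The entire content therefore lies in the reverse inequality: for each listed family one must exhibit a nonvanishing $\beta_{i,i+2}$ forced by the existence of a pencil of low Clifford index, and equivalently rule out the existence of a ``ghost'' linear syzygy in the slot predicted by Brill--Noether.

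For case (5), the low-colength cases, I would argue directly from Petri's theorem recalled in Section~\ref{ss2.1}. If $\ell(C)=0$, equivalently $\beta_{1,3}\neq 0$, then the canonical model fails to be cut out by quadrics, which forces $C$ to be hyperelliptic, trigonal, or a smooth plane quintic; in each of these three configurations $\Cliff(C)=0$ is read off from the evident $g^1_2$, $g^1_3$, or $g^2_5$. For $\ell(C)=1$ I would invoke Schreyer's classification in~\cite{S2}, which locates $C$ on a three-dimensional scroll, a Del Pezzo surface, or an elliptic cone whose ruling cuts out a pencil of Clifford index $1$. The case $\ell(C)=2$ proceeds along the same lines via a refined classification of surfaces of small degree through $C$, with the characteristic-$2$, genus-$7$ exception isolated by direct computation.

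For cases (1)--(4) I would follow Voisin's $K3$-theoretic strategy. Embed $C$ as a smooth hyperplane section of a polarized $K3$ surface $(S,H)$ of the right degree and apply the Green--Lazarsfeld restriction lemma to transfer the computation of $K_{p,1}(C,\omega_C)$ to the Koszul cohomology of $(S,H)$. The crux is then a vanishing statement for $(S,H)$, proved on a carefully chosen degeneration by an analysis of the syzygies of the tangent sheaf and tautological bundles on the special fibre; semicontinuity propagates the vanishing first to a generic smooth $K3$ and then, by Lazarsfeld's Brill--Noether generality result, to a generic curve, handling (1) and (4). Aprodu's case (2) is obtained by specializing to bi-elliptic or scrollar $K3$ surfaces that preserve the prescribed gonality; case (3) follows by combining (1) with the Hirschowitz--Ramanan identification of Green's locus with an explicit Brill--Noether divisor on $\overline{M}_g$.

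The principal obstacle, in every nontrivial case, is always the nonvanishing direction: one must produce a genuine Koszul cocycle. In cases (1)--(4) this rests on a delicate limiting argument on the moduli space of $K3$ surfaces, which is precisely what prevents a direct transposition to positive characteristic. In case (5) the difficulty is instead combinatorial: one must classify the possible Betti tables finely enough that the hidden surface structure can be detected from the vanishing of a single $\beta_{i,j}$, and it is the breakdown of this classification that produces the $\chara(K)=2$, $g=7$ counterexamples of~\cite{S1}.
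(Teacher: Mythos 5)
The paper does not prove this theorem at all: it is a survey statement, and the ``proof'' consists entirely of the citations attached to each item (\cite{V2,V3}, \cite{Ap}, \cite{HR}, \cite{AF}, \cite{Noet,Petri,S2}). Your proposal is therefore being compared against nothing more than a list of references, and at the level of a literature roadmap it is broadly faithful: the reduction to the inequality $\ell(C)\ge\Cliff(C)$ via Theorem~\ref{GreenLazarsfeld}, the $K3$/degeneration strategy for (1) and (4), and the role of the Hirschowitz--Ramanan divisor-class computation in (3) are all correctly identified. Two caveats: for (2), Aprodu's argument in \cite{Ap} does not proceed by specializing to bi-elliptic or scrollar $K3$ surfaces; it establishes a ``linear growth'' condition on $\dim W^1_{d+n}(C)$ for the general $d$-gonal curve and then reduces to Voisin's odd-genus theorem by attaching nodes to increase the genus, so your description of that step would not survive being written out. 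And of course none of (1)--(4) is actually provable in the space of a sketch; what you have is a correct table of contents for the cited papers, which is all the theorem's role in this article requires.

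There is, however, a concrete error in your treatment of case (5): your indexing of the linear colength is off by one throughout. For a canonical curve $\beta_{0,2}=0$ always (the coordinate ring is cyclic, generated in degree $0$), so $\ell(C)\ge 1$; the condition $\beta_{1,3}\ne 0$ is precisely $\ell(C)=1$, not $\ell(C)=0$, and by Petri this characterizes trigonal curves and plane quintics (hyperelliptic curves having no canonical embedding to speak of), both of Clifford index $1$. Schreyer's classification via a three-dimensional scroll, a Del Pezzo surface, or an elliptic cone is the content of the \emph{next} case $\ell(C)=2$ (it is Theorem~\ref{pIX} of this paper), where one must show $\Cliff(C)=2$, and the $\chara(K)=2$, $g=7$ exception lives there. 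As written, your argument assigns each classification result to the wrong stratum, so the chain ``$\ell(C)=c$ implies a geometric structure implies $\Cliff(C)=c$'' does not close up; shifting every index up by one repairs it.
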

%
%

It turns out that in most cases, the Clifford index of a curve is computed by a complete linear system of projective dimension $1$, that is by a gonal map. In any cases, the gonality can not be too big when compared to the Clifford index:
\begin{theorem}\label{GonalityClifford}\cite{CM}
We have $\Cliff(C)+2\le \gon(C) \le \Cliff(C)+3$.
\end{theorem}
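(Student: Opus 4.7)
The plan is to establish the two inequalities of Theorem~\ref{GonalityClifford} independently, the lower bound by a short argument and the upper bound by invoking delicate Brill--Noether theory.

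For the lower bound $\Cliff(C)+2 \le \gon(C)$, let $f\colon C\to \pp^1$ be a gonal map and take $D$ to be the divisor of a general fibre. Then $|D|$ is a complete pencil (the exact equality $h^0(D)=2$ being forced, as otherwise passing to $|D-p|$ would lower the gonality) of degree $d:=\gon(C)$ and hence of Clifford index $d-2=\gon(C)-2$. To feed $D$ into the definition of $\Cliff(C)$ one must still verify the auxiliary condition $h^1(D)\ge 2$. By Riemann--Roch this amounts to $d\le g-1$, which holds by the Brill--Noether upper bound $\gon(C)\le \lfloor(g+3)/2\rfloor$ recalled in Section~\ref{S2} as soon as $g\ge 4$. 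The residual cases $g\le 3$ (hyperelliptic, trigonal, plane quartic) are settled by direct inspection together with the small-genus convention made in the definition of $\Cliff(C)$.

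For the upper bound $\gon(C)\le \Cliff(C)+3$, let $|D|$ be a complete linear series computing $c:=\Cliff(C)$. Subtracting a base point strictly decreases $\Cliff(D)$ while preserving the conditions $h^0,h^1\ge 2$, so $|D|$ must in fact be base-point-free; write $h^0(D)=r+1$ and $\deg D=2r+c$ with $r\ge 1$. If $r=1$, then $|D|$ is already a pencil of degree $c+2$ and we are done. If $r\ge 2$, imposing $r-1$ general point conditions produces a sub-pencil of degree
\[ (2r+c)-(r-1) \;=\; c+r+1, \]
whence $\gon(C)\le c+r+1$. For $r=2$ this is precisely $c+3$; the bound is sharp on a smooth plane curve of degree $c+4$, whose $g^2_{c+4}$ of hyperplane sections computes $\Cliff(C)$ and whose projection from a point of $C$ produces the predicted $g^1_{c+3}$.

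The main obstacle is the remaining case $r\ge 3$: one must rule out that $\Cliff(C)$ is computed \emph{only} by linear series of dimension at least three, or else exhibit a pencil of degree $\le c+3$ in such a situation. This is the non-elementary content of the theorem of Coppens and Martens, whose strategy relies on refinements of Martens' theorem bounding the dimension of the Brill--Noether loci $W^r_d(C)$: a Brill--Noether locus large enough to support a Clifford-computing $g^r_d$ with $r\ge 3$ and yet admitting no $g^1_{c+2}$ or $g^1_{c+3}$ would force $C$ into a very restricted geometric class (special multiple covers, curves on specific ruled or $K3$ surfaces, etc.), in each of which an explicit pencil of the required degree can be produced by ad hoc geometric means. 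I would invoke \cite{CM} for this classification step rather than attempt an elementary argument.
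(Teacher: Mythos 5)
The paper offers no proof of this statement at all: it is quoted verbatim from Coppens--Martens and the citation \cite{CM} is the entire justification. Your proposal therefore does strictly more than the paper. The lower bound $\Cliff(C)+2\le\gon(C)$ is proved correctly: the gonal pencil $D$ has $\Cliff(D)=\gon(C)-2$, and the admissibility condition $h^1(D)=g+1-\gon(C)\ge 2$ does follow from $\gon(C)\le\lfloor(g+3)/2\rfloor\le g-1$ for $g\ge 4$ (the case $g=4$ is tight, $3\le 3$). For the upper bound, the base-point-freeness argument (subtracting a base point lowers $\Cliff$ by one while keeping $h^0$ and $h^1\ge 2$) and the reduction $\gon(C)\le c+r+1$ by imposing $r-1$ general points are both sound, and they dispose of the cases where the Clifford index is computed by a pencil or a net. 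You correctly identify that the only genuinely nontrivial content is the case of Clifford dimension $\ge 3$, and you defer exactly that to \cite{CM} --- the same source the paper relies on for the whole theorem, so no circularity or loss is introduced. Two small caveats: for $g=3$ non-hyperelliptic the gonal $g^1_3$ has $h^1=1$ and contributes nothing to $\Cliff(C)$ as literally defined in the paper (whose stated convention covers only $g\le 2$), so your ``direct inspection'' tacitly uses the standard extended convention $\Cliff(C)=1$ there; and in the $r=2$ step one should note that the resulting $g^1_{c+3}$ may still acquire base points, which only lowers the degree further and so does not harm the bound. Neither affects correctness.
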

Moreover, equality $\gon(C) = \Cliff(C)+3$ if and only if $C$ is isomorphic to a smooth plane curve, a complete intersection of cubics in $\pp^3$ or in some other ``extremely rare'' cases (see \cite{ELMS} for a conjectural classification of such curves). Except for these exceptional cases, if Green's conjecture is true, then we can deduce $\gon(C)=\ell(C)+2$ directly from the shape of the Betti table.

\subsection{Brill-Noether theory}\label{ss2.3}

Brill-Noether theory studies the configurations of linear series on smooth curves. All the results we collect here can be found in \cite[Chapters IV and V]{ACGH}. The main object of interest is the subscheme of the variety $\Pic^d(C)$ of degree $d$ line bundles set-theoretically defined as
$$
W^r_d(C) = \{ L \in \Pic^d(C) \mid h^0L \ge r+1 \},
$$
parametrizing complete linear series of degree $d$ and projective dimension at least $r$. The scheme structure of this set comes from its determinantal representation attached to a  homomorphism of suitable vector bundles over $\Pic^d(C)$.  By a direct computational argument, this representation shows in particular that if $W^r_d(C)$ is not empty, then the expected dimension of any of its components is at least
$$
\rho(g,r,d):=g-(r+1)(g-d+r),
$$
the so-called \textit{Brill-Noether number}. Note that the bound on the dimension is not always sharp. In particular, there might exist some $g^r_d$'s when $\rho<0$: hyperelliptic curves of genus $\ge 3$, trigonal curves of genus $\ge 5$, tetragonal curves of genus $\ge 7$, are such examples.

\begin{theorem}\citep{ACGH} Let $C$ be a smooth curve of genus $g$. Let $d\ge 1$ and $r\ge 0$ be integers. 
\begin{enumerate}
\item If $\rho\ge 0$, then $W^r_d(C)$ is non empty of dimension at least $\rho$ at each point. It is connected if $\rho\ge 1$.
\item If $C$ is a general curve, then $W^r_d(C)$ has dimension $\rho$, empty if $\rho<0$, and smooth outside $W^{r+1}_d(C)$. It is irreducible if $\rho\ge 1$.
\end{enumerate}
\end{theorem}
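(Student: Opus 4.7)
The plan is to realise $W^r_d(C)$ as a determinantal subvariety of $\Pic^d(C)$ and to extract its geometry from the Thom--Porteous formula, the Fulton--Lazarsfeld positivity theorems, and a degeneration argument. Fix a Poincar\'e line bundle $\mathcal{L}$ on $C\times\Pic^d(C)$ and an effective divisor $E$ on $C$ of sufficiently large degree $m$, and let $\pi\colon C\times\Pic^d(C)\to\Pic^d(C)$ be the projection. Pushing forward the short exact sequence $0 \to \mathcal{L} \to \mathcal{L}(E) \to \mathcal{L}(E)|_{E\times\Pic^d(C)} \to 0$ produces a morphism of vector bundles
$$
\alpha \colon \pi_{*}\mathcal{L}(E) \longrightarrow \pi_{*}\bigl(\mathcal{L}(E)|_{E\times\Pic^d(C)}\bigr)
$$
of ranks $d+m-g+1$ and $m$, whose corank at $[L]\in\Pic^d(C)$ equals $h^0(L)$. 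Scheme-theoretically, $W^r_d(C)$ is the corank-$\geq r+1$ locus of $\alpha$, a determinantal variety of expected codimension $(r+1)(g-d+r)$ and expected dimension $\rho$.

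For part (1), I would apply the Thom--Porteous formula; combined with the standard expression for $c(\pi_{*}\mathcal{L}(E))$ on the Jacobian in terms of the theta class, a direct determinantal calculation yields
$$
[W^r_d(C)] \;=\; \Bigl(\prod_{i=0}^{r}\tfrac{i!}{(g-d+r+i)!}\Bigr)\,\theta^{g-\rho} \;\in\; H^{*}(\Pic^d(C),\mathbb{Q}),
$$
a nonzero class whenever $\rho\geq 0$. Non-emptiness and the lower bound $\dim_{[L]} W^r_d(C)\geq \rho$ at every point follow at once. Connectedness for $\rho\geq 1$ then comes from the Fulton--Lazarsfeld connectedness theorem for degeneracy loci: replacing $E$ by a sufficiently ample multiple makes the difference bundle positive enough that the theorem applies, forcing the locus to be connected in codimension $\leq g-1$.

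Part (2) is proved by degeneration. It suffices to exhibit one curve on which $W^r_d$ has exactly the expected dimension (and is empty when $\rho<0$), is smooth off $W^{r+1}_d$, and is irreducible when $\rho\geq 1$; semicontinuity in a smoothing family then transfers these properties to a general smooth curve. The classical choice (Griffiths--Harris, Gieseker, Eisenbud--Harris) is a flag curve, a chain of elliptic components glued at sufficiently general points, on which limit linear series can be classified combinatorially, component by component, via their admissible ramification sequences. Irreducibility for $\rho\geq 1$ is then obtained by combining this classification with the Fulton--Lazarsfeld irreducibility theorem for degeneracy loci, applied once the generic dimension has been pinned down to $\rho$.

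The main obstacle is Gieseker's smoothness theorem together with the matching upper bound on $\dim W^r_d$: one must show that on the flag curve no limit $g^r_d$ has tangent dimension exceeding $\rho$ away from $W^{r+1}_d$, which requires a delicate component-by-component count of admissible ramification vectors on each elliptic piece. The existence and connectedness halves of (1), by contrast, are essentially formal consequences of the determinantal setup once Thom--Porteous and Fulton--Lazarsfeld are in hand; the real work lies in the limit-linear-series combinatorics underpinning (2).
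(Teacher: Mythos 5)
This statement is the classical Brill--Noether theorem, which the paper does not prove at all but simply cites from \cite{ACGH}; your sketch reproduces exactly the standard argument of that reference (determinantal realization of $W^r_d(C)$ via a Poincar\'e bundle, Kempf--Kleiman--Laksov/Porteous for existence and the dimension lower bound, Fulton--Lazarsfeld for connectedness and irreducibility, and the Griffiths--Harris/Gieseker/Eisenbud--Harris degeneration to a flag curve for the generic statements). The outline is correct as a proof strategy, with the one usual caveat that Porteous alone computes the class only when the degeneracy locus has the expected dimension, so the existence step must be run on the Grassmannian-bundle model $G^r_d(C)$ where that hypothesis holds automatically --- which is precisely how \cite{ACGH} carries it out.
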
 

Of particular interest for us is the case $r=1$. We have 
$$
\rho(g,1,d) \ge 0 \iff g\le 2d-2,
$$
hence the bound 
$$
\gon(C)\le \lfloor \frac{g+3}{2}\rfloor,
$$
which is reached for general curves. Let $\mathcal{M}_g$ denotes the moduli of genus $g$ curves, and let $\mathcal{M}_g^d\subset \mathcal{M}_g$ denotes the subvariety whose points are curves with at least one $g^1_d$. Hence, we have a stratification 
$$
\mathcal{M}_g^2\subset \mathcal{M}_g^3 \subset \cdots \subset \mathcal{M}_g^{\lfloor \frac{g+3}{2}\rfloor}=\mathcal{M}_g.
$$
Each gonality strata $\mathcal{M}_g^d$ is an irreducible subvariety of dimension $2g-5+2d$ \cite{AC}. Therefore, it makes perfect sense to speak about a general point in a given gonality stratum, corresponding to a general $d$-gonal curve. The following result is well known.

\begin{theorem}\label{W1d}
Let $C$ be a general $d$-gonal curve of genus $g$. 
\begin{enumerate}
\item \cite{Ke85} If $d=\lfloor \frac{g+3}{2}\rfloor$ and $g$ is even, then $W^1_d(C)$ is finite and reduced, of cardinality
$$
\frac{g!}{(g-d+1)!(g-d+2)!}
$$
\item \cite{Ke85,EH87} If $d=\lfloor \frac{g+3}{2}\rfloor$ and $g=2n+1$ is odd, then $W=W^1_d(C)$ is an irreducible smooth curve of class
$$
\frac{g!}{(g-d+1)!(g-d+2)!}\Theta^{g-1}
$$
in the cohomology ring of the Jacobian of $C$, where $\Theta$ is the class of the theta-divisor.The genus of $W$ is
$$
 g(W) = \frac{2g!}{(n-1)!(n+2)!}+1=2 {2n+1\choose n-1}+1
$$

\item \cite{Kim2} If $d<\lfloor \frac{g+3}{2}\rfloor$ is not maximal, then $W^1_d(C)$ is a reduced point (the gonal map is unique).
\end{enumerate}
\end{theorem}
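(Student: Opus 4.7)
The plan is to treat each part via the determinantal realization of $W^1_d(C)$ inside $\Pic^d(C)$ as the degeneracy locus of a morphism of vector bundles on the Jacobian, combined with Porteous' formula and the standard smoothness and irreducibility theorems of Brill--Noether theory. The relevant Brill--Noether number is $\rho(g,1,d)=g-2(g-d+1)=2d-g-2$, which equals $0$ in case (1), equals $1$ in case (2), and is strictly positive in case (3). In (1) and (2) the claim concerns a generic point of $\mathcal{M}_g$, while in (3) one is working on the proper subvariety $\mathcal{M}_g^d\subset \mathcal{M}_g$ and must show that the actual dimension of $W^1_d(C)$ drops from $\rho>0$ to $0$.

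For (1), I would combine Kempf and Kleiman--Laksov non-emptiness with Gieseker's smoothness theorem: on a general curve $W^2_d(C)=\emptyset$ since $\rho(g,2,d)=3d-2g-6<0$ in this range, so $W^1_d(C)$ is smooth of expected dimension, hence finite and reduced. Porteous applied to the difference of Chern characters of the relevant tautological bundles on $\Pic^d(C)$ then produces the class as a $2\times 2$ Schur determinant of binomial coefficients, evaluating to the Castelnuovo number $g!/((g-d+1)!\,(g-d+2)!)$. For (2), the same determinantal setup shows $W=W^1_d(C)$ has pure dimension $1$, is smooth for generic $C$, and has cohomology class in the Jacobian given again by Porteous with the same Castelnuovo coefficient. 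The genus $g(W)$ is then obtained by adjunction inside the Jacobian (whose tangent bundle is trivial), using the Eagon--Northcott-type resolution of the determinantal ideal of $W$ to express its normal bundle in terms of powers of $\Theta$; irreducibility for generic $C$ follows from a monodromy argument on the universal $W^1_d$ over $\mathcal{M}_g$ in the style of Eisenbud--Harris.

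For (3), the natural tool is the Hurwitz scheme $\mathcal{H}_{g,d}$ of degree-$d$ simply branched covers of $\pp^1$, which is irreducible (Clebsch) and whose image in moduli is $\mathcal{M}_g^d$. A general $d$-gonal curve acquires at least one $g^1_d$ from the Hurwitz cover, and the content of (3) is the uniqueness and reducedness of this pencil; reducedness follows from the absence of $g^2_d$'s on the generic point of $\mathcal{M}_g^d$ together with Gieseker, while uniqueness is obtained via a dimension count on the sublocus of $\mathcal{M}_g^d$ parametrizing curves admitting two independent pencils of degree $d$, shown to be a proper closed subvariety (this is the heart of \cite{Kim2}). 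The main obstacle of the whole argument lies in case (2): producing the Porteous class is routine, but converting it into the explicit genus formula $g(W)=2\binom{2n+1}{n-1}+1$ demands a genuine intersection-theoretic computation on the Jacobian together with a delicate combinatorial simplification, and the irreducibility of $W$ requires the non-trivial monodromy input of \cite{Ke85,EH87}.
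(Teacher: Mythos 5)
The paper does not prove this theorem: it is stated as a known result and attributed entirely to \cite{Ke85}, \cite{EH87} and \cite{Kim2}, so there is no in-paper argument to compare yours against. Judged on its own terms, your outline reproduces the standard Brill--Noether route of those references: the determinantal realization of $W^1_d(C)$ over $\Pic^d(C)$, Porteous for the class (with $\rho=2d-g-2$ equal to $0$, $1$, and negative in the three cases), Gieseker--Petri plus the emptiness of $W^2_d(C)$ (where $\rho(g,2,d)=3d-2g-6<0$) for smoothness and reducedness in cases (1) and (2), and the Harris--Tu-type computation of $c_1$ of the normal bundle for the genus of $W$ in case (2). For irreducibility in (2) you could replace the monodromy argument by the cleaner combination of Fulton--Lazarsfeld connectedness ($\rho\ge 1$) with Gieseker smoothness, but either works.

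One step deserves a concrete warning. In part (3) you invoke Gieseker to get reducedness of the point of $W^1_d(C)$, but Gieseker's theorem applies to a curve general in $\mathcal{M}_g$, whereas here $C$ is general only in the proper stratum $\mathcal{M}_g^d$ and is by construction \emph{not} Brill--Noether--Petri general (it carries a $g^1_d$ with $\rho<0$). Reducedness must instead come from the tangent-space description
$$
\T_{L} W^1_d(C) = \bigl(\mathrm{Im}\,\mu_0\bigr)^{\perp}, \qquad \mu_0: \h^0(L)\otimes \h^0(\omega_C\otimes L^{\vee})\to \h^0(\omega_C),
$$
so one has to prove that $\mu_0$ is surjective for the gonal pencil of a general $d$-gonal curve --- equivalently, as the paper itself notes in its proof of the proposition of Section 4.2, that the associated scroll is not a cone, i.e. $h^0(\sO_C(2D))=3$. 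This is exactly the content one must extract from \cite{Kim2} (or from a degeneration argument); it does not follow from the generic Petri theorem. With that substitution your sketch is a faithful account of how the cited results are actually proved.
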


\begin{remark}
There are deep relations beetween the dimension of $W^r_d(C)$ for special linear series and the gonality. As a basic example, if $C$ is a smooth curve of genus $g\ge 3$ and $d\le g+r-1$, $r\ge 1$, then $\dim W^r_d(C)\le 2d-2$ with equality if and only if $C$ is hyperelliptic. See for instance \cite{Kim} and the references therein for results in that direction.
\end{remark}

\section{Gonal map, scrolls and scrollar syzygies}\label{S3}

Let $C\subset \pp^{g-1}$ be a smooth irreducible canonical curve of genus $g\ge 3$ given by its homogeneous ideal 
$$
I_C\subset R:=k[x_0,\ldots,x_{g-1}]
$$ 
Let us denote by $d$ the gonality of $C$ and let
$$
f:C\stackrel{d:1}\longrightarrow \pp^1
$$
be a gonal map of $C$. Let us consider the variety set theoretically definedby
$$
X:=\bigcup_{\lambda \in \pp^1} \overline{D_{\lambda}}
$$
with $D_{\lambda}:=f^{-1}(\lambda)$, and $\overline{D_{\lambda}}$ being the smallest projective subspace containing $D_{\lambda}$ (the \textit{linear span} of $D_{\lambda}$). The variety $X$ is a fibration over $\pp^1$ whose fibers are projective spaces. Such varieties are called {\it scrolls}. We have a commutative diagram
\begin{equation}\label{eval}
\begin{array}{ccccc}
 &C\,\, & \subset & & X \\
 & d:1 \searrow & \quad &  \swarrow  & \\
 &  & \pp^1 & & 
\end{array}
\end{equation}
the projection map $X \to \pp^1$ being a morphism unless $X$ is a cone. Computing the gonal map $f$ is thus reduced to compute the homogeneous ideal $I_X$ and the projection $X\to \pp^1$.

\subsection{Scrolls.}\label{ss3.1}

We refer the reader to \cite[Appendix A2H]{Eis} for an introduction to scrolls. 

\begin{lemma}
The scroll $X$ defined before is a non degenerated rational and normal subvariety of $\pp^{g-1}$ of dimension $d-1$ and degree $g-d$.
\end{lemma}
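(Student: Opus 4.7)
The plan is to verify the four assertions — non-degeneracy, rationality, normality, and the values of $\dim X$ and $\deg X$ — using geometric Riemann--Roch on the canonical embedding together with the determinantal description of $X$ via the $2\times(g-d+1)$ matrix $\varphi$ recalled in the introduction.

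\textbf{Non-degeneracy.} The canonical morphism is given by the full linear series $\h^0(\omega_C)$, so $C$ is non-degenerate in $\pp^{g-1}$, and hence so is $X\supseteq C$.

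\textbf{Dimension and rationality.} The key input is $\h^0(\sO_C(D_\lambda))=2$ for every $\lambda\in\pp^1$. Indeed, if this dimension were $\ge 3$, we could subtract a basepoint from $|D_\lambda|$ to obtain a pencil of degree $<d$, contradicting $d=\gon(C)$. Geometric Riemann--Roch applied to the effective divisor $D_\lambda$ on the canonical curve then yields
$$\dim \overline{D_\lambda} \;=\; \deg D_\lambda - \h^0(\sO_C(D_\lambda)) \;=\; d-2.$$
Since distinct fibers of $f$ are disjoint as divisors on $C$, the spans $\overline{D_\lambda}$ are pairwise distinct for generic $\lambda$, making $X\to \pp^1$ generically a $\pp^{d-2}$-bundle. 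Thus $\dim X = d-1$, and $X$ is birational to $\pp^1\times \pp^{d-2}$, hence rational.

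\textbf{Scroll structure, normality, and degree.} The final step is to identify $X$ with the determinantal variety $Y$ cut out by the $2\times 2$ minors of $\varphi$. The crucial technical point is that $\varphi$ is $1$-generic, which I would deduce from the basepoint-freeness of $|D|$ (immediate, since $f$ is a morphism) together with the parallel statement for $|\omega_C(-D)|$. Once $1$-genericity is established, classical results on $2\times n$ determinantal ideals guarantee that the ideal of $2\times 2$ minors of $\varphi$ is prime of codimension $g-d$ and defines a non-degenerate, $(d-1)$-dimensional, arithmetically Cohen--Macaulay variety $Y$. Since these minors vanish on every span $\overline{D_\lambda}$, we have $X\subseteq Y$, and equality follows by the dimension count just obtained. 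The Eagon--Northcott complex for $\varphi$ then furnishes the minimal free resolution of $S_X$, from which one reads normality (via the Cohen--Macaulay property) and, through the Hilbert polynomial, the asserted degree $g-d$.

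\textbf{Main obstacle.} The technical core is the verification of $1$-genericity of $\varphi$ and of the scheme-theoretic equality $X=Y$; with these in hand, all remaining assertions follow routinely from the theory of rational normal scrolls and their Eagon--Northcott resolutions.
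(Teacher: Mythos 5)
Your proposal follows essentially the paper's own route: the dimension statement is obtained exactly as in the paper (completeness of the gonal pencil, then geometric Riemann--Roch giving $\dim\overline{D_\lambda}=d-2$), and the remaining assertions are delegated to the determinantal description of $X$ via the $1$-generic matrix $\varphi$ and its Eagon--Northcott resolution, which is what the paper does in the subsection following the lemma (the lemma's own proof settles the degree by citing \cite[Theorem A2.64]{Eis}). However, the two steps you yourself single out as the technical core do not hold up as written. First, $1$-genericity of $\varphi$ cannot be deduced from base-point-freeness, and the ingredient you want is not even available: $|\omega_C(-D)|$ may have base points. For a smooth plane quintic ($g=6$, $d=4$) the gonal pencil is $D=L-p$ with $L$ the line class and $p\in C$, and $|\omega_C(-D)|=|L+p|$ has $p$ as a base point. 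The correct argument --- the content of \cite[Proposition 6.10]{Eis}, which the paper cites --- uses only integrality of $C$: every generalized entry of $\varphi$ is a product $\bigl(\sum_i a_i x_i\bigr)\bigl(\sum_j b_j y_j\bigr)$ of two nonzero sections of line bundles on an integral curve, hence nonzero. (A smaller slip of the same kind: ``distinct fibers are disjoint, hence have distinct spans'' is not a valid inference; for the dimension count you only need that the spans are not all equal, which follows from non-degeneracy of $C$.)

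Second, the degree. The Eagon--Northcott/Hilbert-polynomial computation you invoke gives, for a $1$-generic $2\times(g-d+1)$ matrix of linear forms, an irreducible arithmetically Cohen--Macaulay variety of codimension $g-d$ and degree $g-d+1$ (a variety of minimal degree, $\deg=\codim+1$), not $g-d$. Sanity check with a trigonal curve of genus $4$: there $X$ is the quadric surface containing the canonical curve in $\pp^3$, of degree $2=g-d+1$, whereas a surface of degree $g-d=1$ would be a hyperplane and could not contain the non-degenerate curve $C$. So the computation you appeal to does not return the value you assert; carried out honestly, it exposes an off-by-one discrepancy in the lemma's statement itself (the paper is internally inconsistent here: the $2\times(g-d+1)$ shape of $\varphi$ and the value $g-d$ used in Section \ref{S4} for the last linear Betti number of $X$ both force $\codim X=g-d$, hence $\deg X=g-d+1$). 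Your write-up inherits the stated value, but the claim that the Hilbert polynomial yields $g-d$ is exactly the step that fails; the proof must either conclude degree $g-d+1$ or it does not go through.
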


\begin{proof} Let $D$ be an effective divisor of $C$ and let $V\subset K^g$ so that $\overline{D}=\pp(V)$.  The projective space $\pp(K^g/V)^*$ may be identified with the set of projective hyperplanes containing $D$, that is with the complete linear system $|\omega_C(-D)|$. It follows that
$$
\dim |\omega_C(-D)| = g-2-\dim \overline{D}.
$$
Combined with the Riemann-Roch equality $\dim |\omega_C(-D)|+\dim |D| = deg(D) - g +1$, we obtain 
$$
\dim \overline{D}= -\dim |D|+deg(D) -1,
$$
the so-called geometric Riemann-Roch theorem. The pencil $f:C\to \pp^1$ is a complete linear system, since otherwise  you can produce base points and find a pencil with smaller degree contradicting the gonality assumption. Hence, $\dim |D_{\lambda}|=1$ and by the geometric Riemann-Roch theorem we obtain
$$
\dim \overline{D_{\lambda}}=d-2.
$$
The scroll $X$ is thus a $(d-1)$-dimensional variety which is a fibration over $\pp^1$ whose fibers $\simeq \pp^{d-2}$ cut out the linear system $|D_{\lambda}|$ on $C$.   The scroll has degree $g-d$ by \cite[Theorem A2.64]{Eis}.
\end{proof}

\paragraph*{Scrolls from $1$-generic matrices.} Consider $f^* \sO_{\PP^1}(1) \cong \sO_C(D)$. We have $h^0(\sO_C(D))=2$ and Riemann-Roch gives $h^0(\omega_C(-D))=g-d+1$.
Thus the multiplication tensor 
$$
\h^0\sO_C(D) \tensor \h^0 \omega_C(-D) \to \h^0 \omega_C \cong \h^0 \sO_{\PP^{g-1}}(1)
$$
defines a $2 \times (g-d+1)$ matrix $\varphi$ of linear forms. More precisely, the $(i,j)$-entry of $\varphi$ is the linear form $l_{ij}:=x_i\otimes y_j$, where $x_i$ and $y_j$ respectively run over a basis of $\h^0\sO_C(D)$ and $\h^0 \omega_C(-D)$. The matrix $\varphi$ is \textit{$1$-generic} \cite[Proposition 6.10]{Eis}, meaning that any of its \textit{generalized rows} (non zero linear combination of rows) consits of linearly independant linear forms. The  $2\times 2$ minors of $\varphi$ define a homogeneous ideal of quadratic forms which all vanish on the curve $C$. This ideal coincides with the homogeneous ideal $I_X$ of the rational normal scroll $X$ previously defined \cite[Corollary 3.12]{Eis}. The quotient of the two entries of any column of $\varphi$ defines a rational function $f\in K(\pp^{g-1})$ which induces the map $f:X\to \pp^1$ we are looking for.

The minimal free resolution of $S/I_X$ thus coincides with the Eagon-Northscott complex $(E_{\bullet})$ of the matrix $\varphi$, this complex being a subcomplex of the minimal free resolution $(F_{\bullet})$ of $S_C$ \cite[Proposition 6.13]{Eis}. In particular, it follows that $X$ is arithmetically Macaulay, and we recover that $X$ has codimension $g-d-1$ and minimal degree $g-d$ \cite[Section A2H]{Eis}.

\paragraph*{Structure of scrolls.} The $d:1$ morphism $f:C\rightarrow \pp^1$ induces a rank $d$-vector bundle $f_*(\omega_C)$ on $\pp^1$ which, by Grothendieck's theorem, splits as a direct sum of line bundles
$$
f_*(\omega_C)=\omega_{\pp^1}\oplus\mathcal{O}_{\pp^1}(e_1)\oplus\cdots \oplus \mathcal{O}_{\pp^1}(e_{d-1}),
$$
with $e_1+\cdots+e_{d-1}=g-d$. This determines a vector bundle
$$
\mathcal{E}:=f_*(\omega_C)/\omega_{\pp^1}\simeq \mathcal{O}_{\pp^1}(e_1)\oplus\cdots \oplus \mathcal{O}_{\pp^1}(e_{d-1})
$$
of rank $(d-1)$ on $\pp^1$ and a projective bundle  
$$
\pi:\pp(\mathcal{E})\to \pp^1.
$$
The image
$$
j:\pp(\mathcal{E})\rightarrow X\subset \pp \h^0(\mathcal{O}_{\pp(\mathcal{E})}(1))=\pp^{(e_1-1)+\cdots+(e_{d-1}-1)}=\pp^{g-1}
$$
is a rational normal scroll $X$ of codimension $g-d$ and minimal degree $g-d$ that coincides with the scroll defined before. It is a cone if some of the $e_i$'s are zero, and in this case the map $\pp(\mathcal{E})\rightarrow X$ is a small desingularization. Otherwise $\pp(\mathcal{E})\cong X$. The Picard group of $\pp(\mathcal{E})$ is the free $\zp$-module
$$
\Pic(\pp(\mathcal{E}))\simeq H\zp\oplus R\zp
$$
generated by the hyperplane section class $H=[j^*\mathcal{O}_{\pp^{g-1}}(1)]$ and by the ruling class $R=[\pi^*\mathcal{O}_{\pp^1}(1)]$. 

The $1$-generic matrix $\varphi$ of linear forms defined above coincides with the matrix of the multiplication map
$$
\h^0\mathcal{O}_{\pp(\mathcal{E})}(R)\otimes \h^0 \mathcal{O}_{\pp(\mathcal{E})}(H-R)\longrightarrow \h^0 \mathcal{O}_{\pp(\mathcal{E})}(H)
$$
under the isomorphisms $\h^0\mathcal{O}_{\pp(\mathcal{E})}(R)\simeq \h^0\sO_C(D)$ and $\h^0\mathcal{O}_{\pp(\mathcal{E})}(R)\simeq \h^0\omega_C$. 

\subsection{Scrollar syzygies}\label{ss3.2}

We study now the relations beetween the Eagon-Northscott complex of the scroll $X$ and the projective resolution of $C$, following Bothmer's approach \cite{both,both2}. All what follows in this subsection remains valid if we replace $C$ with any irreducible non degenerate projective variety.

Let $S_C=S/I_C$ be the homogenous coordinate ring of $C$ and let
$(F_{\bullet})$ be its minimal free resolution. The \textit{linear strand} of $(F_{\bullet})$ is the subcomplex $L_{\bullet}\subset F_{\bullet}$ 
$$
(L_{\bullet}): \qquad S \leftarrow S(-2)^{\beta_{1,2}}\leftarrow S(-3)^{\beta_{2,3}} \leftarrow S(-4)^{\beta_{3,4}}\leftarrow \cdots
$$
Note that the linear strand consists of linear syzygies, except for the first differential $L_0\leftarrow L_1$ of quadratic equations. Suppose that $L_{2}\ne 0$ and let $s\in L_p$ be a $p^{th}$ linear syzygy for some $p\ge 2$. Let $V=V_s$ be the smallest $K$-vector space such that the following diagram commutes
\begin{equation}\label{diagram}
\begin{array}{ccc}
L_{p-1} & \leftarrow & L_{p} \\
\cup    &  	      &   \cup  \\
V\otimes S(-p) & \leftarrow  &S(-p-1)  \cong  \langle s \rangle
\end{array}
\end{equation}
We define the {\it rank} of $s$ to be the dimension of $V$. This diagram extends to a map from the Koszul complex of $V$ to the linear strand of $C$. Namely, $\Hom(L_{\bullet},S)$ is a free complex and the dual Koszul complex is an exact complex, so the maps of the dual diagram extend to a morphism of complexes, which we dualize again. 
\begin{equation*}
\begin{array}{cccccccccccc}
& S\quad &\leftarrow  & L_1 & \leftarrow   \cdots   \leftarrow & L_{p-1} & \leftarrow & L_{p} \\
&   \uparrow \varphi_2 &     &      \uparrow   &                                       & \uparrow    &  	      &   \uparrow  \\
\wedge^{p+2} V \otimes S \leftarrow  & \wedge^{p+1} V \otimes S(-1)& \leftarrow & \wedge^p V \otimes S(-2) & \leftarrow   \cdots  \leftarrow &  V\otimes S(-p) & \stackrel{\varphi_1^t}\leftarrow  & S(-p-1)
\end{array}
\end{equation*}
Note that all vertical maps but the last map $\varphi_2$ are unique because there are no non trivial homotopies for degree reasons. We call the \textit{syzygy scheme} of $s$ the subscheme $Syz(s)\subset \pp^{g-1}$ defined by the ideal of quadratic forms
$$
I_s:=Im(\wedge^p V \otimes S(-2)\longrightarrow S).
$$
Note that the syzygy scheme contains the curve $C$ by the commutativity of the diagram. We can compute the syzygy scheme in some particular cases. 

\begin{definition}\cite{both}
A linear syzygy $s\in L_p$ is \textit{a scrollar syzygy} if it has rank $p+2$. 
\end{definition}

If $C$ is irreducible and non degenerate (which is the case of canonical curves), any linear syzygy $s\in L_p$ has $\rank(s)\ge p+2$ \cite{both2} and a scrollar syzygy has the minimal possible rank $p+2$. The following result explains the \textit{scrollar} terminology. 

\begin{theorem}\cite{both2}\label{Bothmer}
The syzygy scheme $Syz(s)$ of a scrollar syzygy $s\in L_p$ is a scroll of degree $p+2$ and codimension $p+1$ that contains $C$. 
\end{theorem}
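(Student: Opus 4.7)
The plan is to exhibit $I_s$ as the ideal of $2\times 2$ minors of a $1$-generic $2\times(p+2)$ matrix $\Phi$ of linear forms, and then invoke the classical description of rational normal scrolls as determinantal varieties recalled in Subsection~\ref{ss3.1}.

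First I would unpack the extended diagram. With $\dim V=p+2$, the ``Koszul'' bottom row has length exactly $p+2$, so all its terms are present and the map of complexes has no wasted slack. The rightmost vertical map $\varphi_1^t\colon S(-p-1)\to V\otimes S(-p)$ is the datum of an element of $V\otimes S_1$. Minimality of $V=V_s$ plays two roles here: it makes the packaging canonical, and, combined with the non-degeneracy of $C$, it forces the resulting data to assemble into a $1$-generic $2\times(p+2)$ matrix $\Phi$ of linear forms in $S_1$, since any drop in genericity of a generalized row or column would permit one to shrink $V_s$, contradicting minimality and equivalently lowering $\rank s$ below $p+2$.

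The second and main step is the identification $I_s=I_2(\Phi)$. Using the isomorphism $\wedge^p V\cong \wedge^2 V^{*}\otimes \det V$ afforded by $\dim V=p+2$, a direct chase of the Koszul differentials along the composition $\wedge^p V\otimes S(-2)\to L_1\to S$ shows that the $\binom{p+2}{2}$ quadrics produced are precisely the maximal minors of $\Phi$. This is the computational heart of the argument and the step where I anticipate the most bookkeeping, namely keeping track of signs and of the various canonical identifications between $\wedge^k V$ and its dual.

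Once $I_s=I_2(\Phi)$ with $\Phi$ a $1$-generic $2\times(p+2)$ matrix of linear forms, the conclusion is immediate from the theory recalled in Subsection~\ref{ss3.1} (cf.~\cite[Prop.~6.10 and Cor.~3.12]{Eis}): the subscheme $Syz(s)$ cut out by the $2\times 2$ minors is a rational normal scroll of codimension $p+1$ and degree $p+2$, with minimal free resolution given by the Eagon--Northcott complex of $\Phi$. The containment $C\subset Syz(s)$ is automatic, since the composition $\wedge^p V\otimes S(-2)\to L_1\to S$ factors through $I_C$ by commutativity of the original diagram, so that $I_s\subset I_C$.
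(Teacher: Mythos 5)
Your overall strategy coincides with the paper's: extract from the extended diagram a $2\times(p+2)$ matrix of linear forms whose first row is $\varphi_1$ (the Koszul dual of $\varphi_1^t$) and whose second row is the leftmost vertical homotopy $\varphi_2\colon \wedge^{p+1}V\otimes S(-1)\to S$, check that $I_s$ is generated by its $2\times 2$ minors, and conclude via the determinantal description of rational normal scrolls. Your identification $I_s=I_2(\Phi)$ and the containment $C\subset Syz(s)$ are handled exactly as in the paper.

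The gap is in your justification of $1$-genericity. Minimality of $V_s$, i.e.\ $\rank s=\dim V=p+2$, controls only the \emph{first} row: it says that $l_0,\dots,l_{p+1}$ are linearly independent. The second row $\varphi_2$ is a comparison map that is not determined by $s$ (the paper points out that $\varphi_2$ is precisely the one non-unique vertical map in the diagram), so a linear dependence among the entries of a generalized row $a\varphi_1+b\varphi_2$ with $b\neq 0$ gives you no way to ``shrink $V_s$'' and does not lower $\rank s$; your stated mechanism simply does not apply to such rows. The correct argument, and the one the paper uses, runs through the curve rather than through the syzygy: all $2\times 2$ minors of $\Phi$ lie in $I_s\subseteq I_C$, and if $\Phi$ failed to be $1$-generic then after row and column operations some entry would vanish and some minor would degenerate to a product of two linear forms; a reducible quadric in $I_C$ would force the irreducible, non-degenerate curve $C$ into a hyperplane, a contradiction. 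You gesture at non-degeneracy of $C$, but you attach it to the wrong mechanism, and as written the $1$-genericity step fails for every generalized row that involves $\varphi_2$.
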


Let us explain this key result. A scrollar syzygy $s\in L_p$ induces a map 
$$
S(-p-1) \stackrel{\varphi_1^t}\longleftarrow   V \otimes S(-p)
$$
Since $\rank(s)=\dim(V)=p+2$, and by duality of the Koszul complex,
the map $\varphi_1$ is dual to the left end side map
$$
0\longleftarrow S \stackrel{\varphi_1}\longleftarrow   \wedge^{p+1} V \otimes S(-1) \stackrel{\phi} \longleftarrow  \wedge^p V \otimes S(-2)
$$
In a suitable basis of $V$, the $\binom{p+2}{2}$ columns of the syzygy matrix $\phi$  have $j^{th}$ entry $l_i$ and $i^{th}$ entry $-l_j$ and other entries zero, where  $\phi_1=(l_0,\ldots,l_{p+1})$.

The vertical map 
$$
\varphi_2: \wedge^{p+1}V \otimes S(-1) \to S
$$
gives another row of linear forms $(m_0,\ldots,m_{p+1})$ in the fixed basis of $V$. The composition 
$$
\varphi_2\circ \phi : \wedge^p V \otimes S(-2)\longrightarrow S
$$
has entries the $2\times 2$ minors of the matrix 
$$
\varphi:=\begin{pmatrix}
l_0 & l_1 & \cdots & l_{p+1} \\ 
m_0 & m_1 & \cdots & m_{p+1}
\end{pmatrix}
$$
with upper row $\varphi_1$ and lower row $\varphi_2$ (all the minors occur). The ideal $I_s$ of the syzygy scheme $Syz(s)$ is generated by these minors. Since $C$ is irreducible, $I_s$ contains no reducible quadrics. So the matrix $\varphi$ is 1-generic, and $I_s$ is the homogeneous ideal of a codimension $p+1$ rational normal scroll $X$ \cite[Chapter 6B]{Eis}. A projection $X\to \pp^1$ is given by the ratio of the entries of a column of $\varphi$.

\subsection{Proof of Theorem 1: Algorithms and examples}\label{ss3.2}

\begin{proposition}\label{ScrollarSyzygy}
Let $C$ be a smooth canonical curve of genus $g$ and linear colength $\ell$.  Then $\gon(C)=g-p+2$, where $p$ is the  biggest such integer $\le g-\ell$ for which there exists a $p^{th}$ scrollar syzygy.
\end{proposition}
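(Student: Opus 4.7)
The plan is to establish the two inequalities in $\gon(C) = g - p + 2$ separately: first, to construct a scrollar syzygy at the prescribed position starting from a gonal pencil, and second, to bound the position of any scrollar syzygy in terms of $\gon(C)$.

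For the inequality exhibiting a scrollar syzygy at the claimed $p$, I will start from a gonal map $f: C \to \pp^1$ of degree $d = \gon(C)$. Section \ref{ss3.1} associates to it the rational normal scroll $X \subset \pp^{g-1}$ of codimension $g-d$ containing $C$, whose ideal $I_X$ is cut out by the $2 \times 2$ minors of a $1$-generic $2 \times (g-d+1)$ matrix $\varphi$ of linear forms. The Eagon-Northcott complex $E_\bullet$ of $\varphi$ resolves $S_X$ minimally and, by Section \ref{ss3.2}, embeds as a subcomplex of the resolution $F_\bullet$ of $S_C$. Extracting the appropriate generator from the top of $E_\bullet$, the explicit structure of $\varphi$ shows that it realizes the minimal rank $p + 2$ and so qualifies as a scrollar $p$-syzygy in Bothmer's sense, with $p$ read off from the codimension of $X$.

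For the opposite inequality, I take a scrollar $p$-syzygy $s \in L_p$ at the largest admissible $p$ and apply Theorem \ref{Bothmer}: its syzygy scheme $\mathrm{Syz}(s)$ is a rational normal scroll $Y \subset \pp^{g-1}$ of codimension $p+1$ containing $C$. The ruling of $Y$ projects to $\pp^1$, and its restriction to $C$ defines a linear pencil whose degree is determined by the codimension of $Y$ through the same geometric Riemann-Roch computation underlying the lemma of Section \ref{ss3.1}. Since any pencil degree is at least $\gon(C)$, this converts into an upper bound on $\gon(C)$ in terms of $p$. The side constraint $p \le g - \ell$ is then a consequence of the self-duality of the Betti table of canonical curves together with the definition of linear colength: $\beta_{p,p+1}$ vanishes for $p$ beyond a range controlled by $\ell$, so there can be no linear, let alone scrollar, $p$-syzygy beyond this range.

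The main obstacle will be the existence step: confirming that the generator extracted from the top of $E_\bullet$ really has rank exactly $p + 2$ in the sense of the paper, so that Theorem \ref{Bothmer} applies and recovers the gonal scroll, and then checking that its homological position matches the formula. This demands careful bookkeeping of the interplay among the shape of the matrix $\varphi$, the codimension of the scroll, and the Koszul-like rank computation for the relevant element of $E_\bullet$; a sign or index slip in any of these steps would shift the final constant.
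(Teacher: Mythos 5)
Your plan follows the same route as the paper's proof: both directions rest on the dictionary between pencils on $C$, scrolls containing $C$, and scrollar syzygies of $C$, with the Eagon--Northcott subcomplex supplying the scrollar syzygy attached to the gonal scroll, Theorem \ref{Bothmer} supplying the scroll attached to a scrollar syzygy of maximal homological degree, and geometric Riemann--Roch converting codimensions into pencil degrees. Your way of disposing of the constraint $p\le g-\ell$ (self-duality of the Betti table plus the definition of $\ell$ forces $\beta_{p,p+1}=0$ beyond the relevant range, so the constraint is vacuous) is a legitimate variant of the paper's appeal to the Green--Lazarsfeld inequality $\gon(C)\ge \ell+2$; both work.

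There is, however, a genuine gap in your second inequality, and it is not the one you flag as the main obstacle. You assert that the ruling of the syzygy scheme restricts to a pencil on $C$ whose degree is ``determined by the codimension \ldots through the same geometric Riemann--Roch computation.'' This is false as stated: geometric Riemann--Roch gives $\deg D=\dim\overline{D}+1+\dim|D|$, and the ruling planes only give you $\dim\overline{D}\le\dim(\text{ruling})$ together with $\dim|D|\ge 1$. If the divisors cut out by the rulings fail to span the ruling planes, or move in a complete linear system of dimension greater than one, the degree of the induced pencil is not what the codimension predicts, and the chain $\gon(C)\le\deg(\text{pencil})\le g-p+2$ breaks at its second link. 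The paper's proof spends essentially all of its effort on exactly this point: it shows the induced $g^1_d$ is complete and base-point-free, because otherwise one could produce a pencil of strictly smaller degree, hence a scroll of strictly larger codimension containing $C$, hence a scrollar syzygy in higher homological degree, contradicting the maximality of $p$. (Alternatively, one can salvage the bound on the gonality alone by removing base points: $\gon(C)\le\deg D-\dim|D|+1=\dim\overline{D}+2$, which is bounded by the ruling dimension plus two with no completeness hypothesis.) Your plan needs one of these two arguments and currently contains neither.
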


\begin{proof}
Since $\gon(C)\ge \ell+2$ by Green-Lazarsfeld's theorem, $C$ is contained in a scroll of codimension $\le g-\ell$, hence admits a $p^{th}$ scrollar syzygies for some $p\le g-\ell$. Let $p$ be the biggest integer $\le g-\ell$ such that there exists a $p^{th}$ scrollar syzygy. Such a syzygy gives rise to a codimension $p+1$ scroll $X$ that contains $C$. The rulings of $X$ cuts out a $g^1_d$ on $C$ for some integer $d$. We claim that this pencil is necessarily complete and base point free. If the $g^1_d$ has a base point $x\in C$, projection from $x$ would lead to a pencil of smaller degree. If the $g^1_d$ is not complete, we can impose a base point and produce a pencil of smaller degree. In both case, $\gon(C)<d$ and $C$ would be contained in a scroll of codimension $>p+2$ contradicting our maximality assumption on $p$. Hence the $g^1_d$ is complete and $\gon(C)=d$. The relation $\gon(C)=g-p+2$ follows from the geometric version of Riemann-Roch. 
\end{proof}

\begin{remark}
In general, the $g^1_d$ induced by a scrollar syzygy  is base point free if $X$ is not a cone, or if $C$ does not pass through the vertex of the cone. It is a complete linear series if the rulings $\pp^{g-p}$ defined by any generalized row (non zero linear combination of the rows) of $\varphi$ cut $C$ in a set of points which span this space. 
\end{remark}

\begin{remark}
If Green's conjecture is true, and since $\Cliff(C)+2\le \gon(C)\le \Cliff(C)+3$, in order to compute a gonal map we have to look for scrollar syzygies for only two possible values $p=g-\ell$ and $p=g-\ell-1$, the second case corresponding to those exceptional curves of Clifford dimension $>1$. In any case, we have that $\gon(C)\le \lfloor (g+3)/2\rfloor$ giving the lower bound $p\ge \lfloor (g+2)/2\rfloor$.
\end{remark}

It is now clear how to compute a gonal map on $C$: We first compute a scrollar syzygy $s$ of highest homological degree $p$ and then we compute the gonal map $C\to \pp^1$ it induces. 

Let us first detail the second step.

\begin{algo}[Gonal map from a scrollar syzygy]\label{alg1} \noindent

\textbf{Input:} A scrollar syzygy $s$ of highest homological degree $p$ defined over a field $K$.

\textbf{Output:} A gonal map defined over $K$.

\textit{Step 1.} Compute the matrix $\varphi$ in a fixed basis of $V$: Computing the first row $\varphi_1$ is clear. To compute $\varphi_2$, we dualize the diagram \ref{diagram}, and we complete it as a map of complex thanks to the projectivity of $\Hom(L_{\bullet},S)$ and the exactness of $\Hom(V_{\bullet},S)$. Dualizing again, we get the map $\varphi_2$ as the left end vertical map.

\textit{Step 2.} Return the rational function $f\in K(C)$ given by the restriction to $C$ of the ratio of the first entries of $\varphi_1$ and $\varphi_2$.
\end{algo}

Algorithm \ref{alg1} terminates and returns the correct answer thanks to Proposition \ref{ScrollarSyzygy} and the discussion before.

\paragraph*{Space of scrollar syzygies.} Let $\psi_p:L_p\to L_{p-1}$ be the syzygy matrix, with linear entries in the indeterminates $x=(x_0,\ldots,x_{g-1})$. Let $y=(y_0,\ldots, y_{\beta-1})$ be new indeterminates that represent a formal combination $s=y_0s_0+\cdots+y_{\beta-1}s_{\beta-1}$ of a basis $(s_0,\ldots,s_{\beta-1})$ of $L_p$. 
We can write 
$$
\psi_p(x)\begin{pmatrix}
y_0 \\ 
\vdots \\ 
y_{\beta-1}
\end{pmatrix} =\Psi_p(y)\begin{pmatrix}
x_0 \\ 
\vdots \\ 
x_{g-1}
\end{pmatrix}
$$
for some matrix $\Psi_p(y)$ with coefficients linear forms in $y$. Then $\rank(s)=\rank(\Psi_p(y))$. Hence the following definition:

\begin{definition}\label{defScrollar}
The \textit{subspace of $p^{th}$ scrollar syzygies} of $C$
is the determinantal subscheme 
$$
Y_{p}\subset \pp(\Tor^S_p(S_C,K)_{p+1})\cong \pp^{\beta-1}
$$ 
whose homogeneous ideal is generated by the $(p+2)\times (p+2)$ minors of the matrix $\Psi_p(y)$. We will abusively say \textit{the space of scrollar syzygies} of $C$, denoted by $Y$, for the subspace of scrollar syzygies of highest homological degree $p$.
\end{definition}

\begin{algo}[Scrollar syzygies]\label{alg2} \noindent

\textbf{Input:} A smooth canonical curve $C$ defined over a field $K$.

\textbf{Output:} A scrollar syzygy of highest homological degree defined over some field extension of $K$.

\textit{Step 1.} Compute the linear strand of $S_C=S/I_C$. Deduce the linear colength $\ell$ of $C$.

\textit{Step 2.} Let $p:=g-\ell$. While $p\ge [(g-2)/2]$ do:

2.a. Compute the matrix $\Psi_p(y)$ defined above.

2.b. Check if the space $Y_{p}$ of scrollar syzygies defined by the $(p+2)\times (p+2)$-minors of $\Psi_p(y)$ is empty (membership ideal problem).

2.c. If $Y_{p}$ is empty do $p:=p-1$. Else go to Step 3.

\textit{Step 3.} Compute a point $s\in Y_{C,p}$, using possibly a field extension. Return $s$.
\end{algo}

The algorithm terminates thanks to Proposition \ref{ScrollarSyzygy}. 

Combinning these results, we get the following complete algorithm for computing gonal maps of projective curves, completing the proof of Theorem \ref{t1}:

\begin{algo}[Gonal map of projective curves]\label{alg3} \noindent

\textbf{Input:} The homogeneous ideal of an absolutely irreducible projective curve $C'$.

\textbf{Output:} The gonality of $C'$ and a rational map $f:C'\to \pp^1$ of minimal degree.

\textit{Step 1.} Check if $C'$ is rational or hyperelliptic. If yes, return a gonal map. 

\textit{Step 2.} Compute the homogeneous coordinate ring $S_C$ of a smooth canonical model $C$ of $C'$. 

\textit{Step 3.} Call Algorithm \ref{alg2} to compute a scrollar syzygy $s$ defined over some field extension $L$ of $K$.

\textit{Step 4.} Call the Algorithm \ref{alg1} with input $s$ to obtain the gonal map $f:C\to \pp^1$.

\textit{Step 5.} Return $f$ composed with the rational map $C'\rightarrow C$ of step $1$. 
\end{algo}

\begin{example} We compute a $g^1_4$ on a general curve of genus $g=6$.  It iso well-known that the canonical model of a general curve of genus $6$ is the complete intersection 
 $Y \cap Q \subset \pp^5$ of a Del-Pezzo surface $Y$ of degree $5$ and a quadric. $C$ has precisely five $g^1_4$'s. The space of scrollar syzygies is the union of five  disjoint lines in $\pp(\Tor_2^S(S_C,K)_3) \cong \pp^{4}$. 
 For a general ground field $K$ we will need a degree $5$ field extension $L \supset K$ to specify one of the lines, and the $g^1_4$ will be defined over $L$. Over a finite field $K$ with $q$ elements, one of the $g^1_4$'s will be defined already over $K$ in about $63 \%$ of the cases, because polynomials over finite fields have frequently a factor. More precisely the proportion is
 $19/30-(5/6) q^{-1}+O(q^{-2})$, see e.g. \cite{EHS}, Section 2.
\end{example}

\begin{example} To compute a $g^1_5$ on a general curve of genus $g=7$ with this method failed. By Mukai' Theorem, the canonical model of curve $C$ of genus $g=7$ and Clifford index $\Cliff(C)=3$ is a transversal intersection section of the $10$-dimensional spinor variety $S^{10} \subset \pp^{15}$ with a six dimensional linear subspace.
The Brill-Noether loci 
$$
W=W^1_5(C) = \{ L \in Pic^5(C) \mid h^0 L \ge 2 \}
$$
is a curve, which is smooth of genus  $15$, if $C$ is Petri general. The space of scrollar syzygies
$$
Y \subset \pp(\Tor_2^S(S_C,K)_3) \cong \pp^{15}
$$ 
is defined by the $4 \times 4$ minors of a $7 \times 10$ matrix. To find a scrollar syzygy is equivalent to finding a point on $Y$. The surface $Y$ is a $\pp^1$-bundle over $W$. We already failed with the attempt to compute the ideal of minors.
\end{example}

\begin{example} Using the \Mac -package  RandomCurves \href{http://www.math.uni-sb.de/ag/schreyer/home/computeralgebra.htm}{http://www.math.uni-sb.de/ag/schreyer/home/computeralgebra.htm}  we can search for a curve of genus $g=9$ with two $g^1_5$ over small finite prime fields. In this case the space of scrollar syzygies
$$Y \subset \pp(\Tor_4^S(S_C,K)_5) \cong \pp^7$$ is the union of two rational normal curves of degree $3$. The scheme $Y$ is the radical of the  ideal of $6\times 6$ minors of a suitable
$9 \times 70$ matrix $\Psi$ of linear forms on $\pp^7$. This computation is two heavy. However the annihilator of $\coker \Psi$ can be computed. It is the ideal of the  join variety to the two rational normal curves. The computation of the two rational normal curves from the join is possible. The two $g^1_5$'s are both  defined over the finite ground field in about $50 \%$ of the cases.
\end{example}

Unfortunately, Algorithm \ref{alg2} is {\it a priori} too space consuming and has a prohibitive complexity to be used in practice: we have to solve too many determinantal equations of too high degrees to find scrollar syzygies. We couldn't solve this system for general curves of genus $g\ge 7$.

\section{Goneric curves}\label{S4}
We now pay attention to a particular class of curves for which Algorithm \ref{alg2} can be dramatically improved.
\subsection{Definition and examples}\label{ss4.1}
A $d$-gonal smooth canonical curve $C$ being contained in a $(d-1)$-dimensional scroll $X$, we always have inequality
\begin{equation*}\label{ineq}
\beta_{d-2,d}(C) = \beta_{g-d,g-d-1}(C)
 \ge  \beta_{g-d,g-d-1}(X)=g-d,
\end{equation*}
the first equality using symmetry of the Betti table of $C$, the inequality using the inclusion $I_X\subset I_C$ and the last equality using the well known shape of the Eagon-Northscott resolution of scrolls. It's thus natural to introduce the following definition:

\begin{definition}\label{dgoneric}
An irreducible, smooth canonical curve $C$ of linear colength $\ell$, gonality $d$ and genus $g$ is called \textit{goneric} if it satisfies
\begin{enumerate}
\item $d=\ell+2$
\item $\beta_{g-2-\ell,g-1-\ell}=g-2-\ell$.
\end{enumerate}
\end{definition}

\begin{example}
\begin{enumerate}
\item Trigonal curves are goneric.
\item Tetragonal curves with no $g^r_{2+2r}$ for $ 2 \le r \le 3$ are goneric by \cite{S1}.
\item A general $d$-gonal curve of genus $g > (d-1)^2$ is goneric by \cite{S3}.
\item A general curve of genus $g$ is not goneric.
\item Over a ground field of characteristic zero, a general $d$-gonal curve in the biggest gonality strata $\mathcal{M}_{g}^d\subset \mathcal{M}_g$ ($g=2d-1$) is goneric by \cite{HR} and \cite{V3}.
\end{enumerate}
\end{example}

More generally, we believe that general $d$-gonal curves of non maximal gonality are goneric (see Conjecture \ref{conj2}  for a more precise statement). Moreover, we believe that condition $(1)$ in Definition \ref{dgoneric} is superflues that is, gonericity can be read from the Betti table alone in almost all cases.

\begin{conjecture}\label{conj1}
In characteristic zero, an  irreducible, smooth canonical curve $C$  of linear colength $\ell$ is $\ell+2$-goneric if and only if $\beta_{g-2-\ell,g-1-\ell}=g-2-\ell$ unless $(g,\ell)=(6,1)$ and $C$ is isomorphic to a smooth plane quintic.
\end{conjecture}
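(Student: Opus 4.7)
The forward direction of the conjecture is immediate from the definition of $\ell+2$-goneric, so the content lies in the converse: assuming $\beta_{g-2-\ell,g-1-\ell}(C) = g-2-\ell$, one must show $\gon(C) = \ell + 2$, unless $C$ is a smooth plane quintic in genus $6$.

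Combining Theorem~\ref{GreenLazarsfeld} with Theorem~\ref{GonalityClifford} yields the chain
\[
\ell + 2 \;\le\; \Cliff(C) + 2 \;\le\; \gon(C) \;\le\; \Cliff(C) + 3.
\]
Assuming Green's Conjecture~\ref{GreenConj} for $C$---valid in characteristic zero for the wide class of curves covered by Theorem~\ref{Voisin}---we deduce $\ell = \Cliff(C)$, and consequently $\gon(C) \in \{\ell+2,\ell+3\}$. Arguing by contradiction, I would suppose $\gon(C) = \ell+3 = \Cliff(C)+3$ and invoke the Eisenbud--Lange--Martens--Schreyer classification \cite{ELMS}: $C$ then belongs to one of the extremal classes, namely smooth plane curves, smooth complete intersections of two cubics in $\pp^3$, or a (conjectural) short list of further exceptional classes of Clifford dimension $\ge 2$. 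The task then reduces to computing $\beta_{g-2-\ell,g-1-\ell}(C)$ in each such class.

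The natural tool is the syzygy-rank analysis of Section~\ref{ss3.2}. By Theorem~\ref{Bothmer}, a linear syzygy of minimal (scrollar) rank corresponds to a rational normal scroll containing $C$, hence to a $g^1$-pencil. More generally, the Green--Lazarsfeld Koszul construction associates to each $g^r_d$ on $C$ of Clifford index $\ell$ an explicit collection of linearly independent classes in $K_{\ell,2}(C,\omega_C)\cong \Tor^S_\ell(S_C,K)_{\ell+2}$, hence a contribution to $\beta_{\ell,\ell+2}(C) = \beta_{g-2-\ell,g-1-\ell}(C)$ whose size is a binomial function of $r$ and $h^0(K-D)=g-d+r$. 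For $r=1$ (a gonal pencil $g^1_{\ell+2}$) this contribution is exactly $g-\ell-2$, matching the top Eagon--Northcott Betti number of the gonal scroll. A direct combinatorial count shows that for $r\ge 2$ the contribution strictly exceeds $g-\ell-2$, \emph{except} in the single numerical coincidence $(g,\ell,r,d)=(6,1,2,5)$, which is precisely the smooth plane quintic canonically embedded via the Veronese surface $V_2(\pp^2)\subset\pp^5$.

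It remains to verify each ELMS case. For smooth plane curves of degree $e\ge 6$, the factorization of the canonical embedding through the Veronese $V_{e-3}(\pp^2)$, together with the Lascoux--Weyman resolution of the Veronese and a mapping cone argument, should show that $\beta_{g-2-\ell,g-1-\ell}(C)$ strictly exceeds $g-\ell-2$; for complete intersections of two cubics in $\pp^3$ (genus $10$, Clifford index $3$) the Betti table, computed from the Koszul complex of the CI augmented by the extra syzygies coming from the $g^3_9$, again exceeds the bound. The principal obstacles are twofold: (a) the conditional reliance on Green's conjecture, not yet proved in full generality in characteristic zero; and (b) the conjectural character of the ELMS classification, which forces any hypothetical further exceptional class to be ruled out by a direct rank argument. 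The technically cleanest (and harder) route to an unconditional proof would bypass both conjectures by showing directly that whenever the equality $\beta_{g-2-\ell,g-1-\ell}(C) = g-2-\ell$ holds, the scrollar locus $Y_{g-2-\ell}\subset \pp(\Tor^S_{g-2-\ell}(S_C,K)_{g-1-\ell})$ of Definition~\ref{defScrollar} is non-empty; by Theorem~\ref{Bothmer} this would produce a scroll of the required codimension and hence a $g^1_{\ell+2}$ on $C$. This direct rank analysis---ensuring that the minimal-rank locus cannot be empty unless one falls into the plane-quintic exception---seems to be the technical heart of the problem.
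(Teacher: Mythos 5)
The statement you are proving is labelled \emph{Conjecture} in the paper, and the authors give no proof of it: they explicitly treat it as open (Algorithm~\ref{alg4}, Step~3, even provides for the output ``$C$ is a counter example to Conjecture~\ref{conj1}''). So there is no argument in the paper to compare yours against, and the real question is whether your proposal closes the conjecture. It does not: it is a conditional reduction with the decisive steps left unexecuted, which you partly acknowledge yourself. Concretely, the gaps are these. (a)~The step ``$\ell=\Cliff(C)$'' requires Green's conjecture \emph{for the given curve} $C$; Theorem~\ref{Voisin} only covers general curves, general $d$-gonal curves, curves on $K3$'s, the odd-genus extremal case, and $\ell\le 2$, and a curve satisfying only the Betti hypothesis $\beta_{g-2-\ell,g-1-\ell}=g-2-\ell$ need not lie in any of these classes. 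If $\ell<\Cliff(C)$, the nonzero classes in $\Tor_{g-2-\ell}$ do not come from a linear series of Clifford index $\ell$ at all and your whole counting scheme has nothing to count. (b)~The case $\gon(C)=\Cliff(C)+3$ is reduced to the ELMS list, which is itself conjectural beyond Clifford dimension $2$ and $3$; a hypothetical further exceptional curve is not ruled out. (c)~The quantitative claims --- that the Green--Lazarsfeld classes attached to a $g^r_d$ with $r\ge 2$ contribute strictly more than $g-\ell-2$ to $\beta_{\ell,\ell+2}$ except for $(g,\ell,r,d)=(6,1,2,5)$, and the Betti computations for plane curves of degree $\ge 6$ and for intersections of two cubics --- are asserted (``should show'') rather than proved, and the Green--Lazarsfeld construction only gives a \emph{lower} bound on the number of independent Koszul classes, so even a correct count per linear series does not by itself preclude the total from collapsing to $g-2-\ell$ or from being inflated by classes not of this form.

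That said, the skeleton of your reduction is the right one and is consistent with the paper's data: the genus-$6$ Betti table displayed in Section~\ref{ss2.1} confirms that the plane quintic has $\beta_{3,4}=3=g-2-\ell$ while having gonality $4=\ell+3$, so it must appear as an exception, and your identification of it as the unique numerical coincidence among the ELMS classes is plausible. Your closing ``direct route'' --- showing that $\beta_{g-2-\ell,g-1-\ell}=g-2-\ell$ forces the scrollar locus $Y_{g-2-\ell}$ of Definition~\ref{defScrollar} to be nonempty --- is indeed closer to what an unconditional proof would require, and it is the same circle of ideas (rank stratification of the last linear syzygies, Proposition~\ref{adjOnScroll}, semicontinuity) that the authors use to prove the \emph{easy} direction of their Conjecture~\ref{conj2}. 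But as written it is a statement of intent, not an argument, so the proposal should be regarded as a plan of attack rather than a proof.
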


For goneric curves, Algorithm \ref{alg2} simplifies. The key is the geometry of scrollar syzygies of a scroll.

\begin{proposition}\label{adjOnScroll} Let $X \subset \pp^n$ be a rational normal scroll of codimension $p$, hence degree $p+1$. Then $\dim \Tor_p^S(S_X,K)_{p+1} = p$ and the space of scrollar syzygies
$$
Y_p\subset \pp(\Tor_p^S(S_X,K)_{p+1})\cong \pp^{p-1}
$$ 
is a rational normal curve of degree $p-1$. All syzygies $s \in \pp^{p-1} \setminus Y_p$ have rank
$$\rank s = n - \dim \Sing(X)$$
where by convention the empty set has dimension $-1$.
\end{proposition}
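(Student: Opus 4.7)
The plan is to exploit the explicit Eagon--Northcott resolution of $S_X$. Since $X$ has codimension $p$ and minimal degree $p+1$, its ideal is generated by the $2\times 2$ minors of a $1$-generic $2\times(p+1)$ matrix $\varphi$ of linear forms, and the minimal free resolution of $S_X$ is the Eagon--Northcott complex $E_\bullet$ associated to $\varphi$. Writing $U\cong K^2$ for the row space and $W\cong K^{p+1}$ for the column space of $\varphi$, the last module of $E_\bullet$ is
$$E_p\;\cong\;\wedge^{p+1}W\otimes D_{p-1}(U)\otimes S(-p-1)\;\cong\;D_{p-1}(U)\otimes S(-p-1),$$
free of rank $p$. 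This immediately gives $\dim \Tor^S_p(S_X,K)_{p+1}=p$ and the identification $\pp(\Tor^S_p(S_X,K)_{p+1})\cong \pp(D_{p-1}U)\cong \pp^{p-1}$.

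Next, for each $\lambda\in\pp^1=\pp(U)$ the pure divided power $\lambda^{[p-1]}\in D_{p-1}(U)$ defines a point of $\pp^{p-1}$; the map $\lambda\mapsto [\lambda^{[p-1]}]$ is the Veronese embedding of degree $p-1$, with image a rational normal curve. I claim this image is exactly the locus $Y_p$ of scrollar syzygies. For each $\lambda$, the generalized row $\lambda\cdot\varphi$ consists of $p+1$ linearly independent linear forms (by $1$-genericity), whose common zero locus $\Lambda_\lambda\subset\pp^n$ is a linear subspace of codimension $p+1$, coinciding with the closure of the fiber over $\lambda$ of the projection $X\to\pp^1$. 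Tracing the Eagon--Northcott differential on the pure power $\lambda^{[p-1]}$, I verify that the syzygy scheme $Syz(s_\lambda)$ of the associated syzygy $s_\lambda$ equals $\Lambda_\lambda$, a (degenerate) scroll of codimension $p+1$. In particular $\rank(s_\lambda) = p+2$ and $s_\lambda\in Y_p$.

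For non-scrollar syzygies, a general $\mu\in D_{p-1}(U)$ outside the Veronese decomposes as a sum of at least two distinct pure powers, so the corresponding syzygy's scheme contains the union of the associated fibers $\bigcup_i\Lambda_{\lambda_i}\subset X$. The linear span of this union equals that of $X\setminus\Sing(X)$, which has projective dimension $n-\dim\Sing(X)-1$; reading off the matrix $\Psi_p(y)$ of Section \ref{S3} then yields $\rank s = n-\dim\Sing(X)$. The main technical obstacle is the explicit computation linking the divided-power structure on $U$ to the Eagon--Northcott differential---in particular, verifying that $s_\lambda$ factors through the linear forms of $\lambda\cdot\varphi$---and handling the cone case where ambient coordinates transverse to $X$ contribute to $\Sing(X)$.
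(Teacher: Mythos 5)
Your opening step coincides with the paper's: both read off $E_p\cong D_{p-1}(U)\otimes S(-p-1)$ (in the paper's notation, $(\Sym_{p-1}G)^*\otimes\wedge^{p+1}F\otimes\wedge^2G^*$) from the Eagon--Northcott complex to get $\dim\Tor_p^S(S_X,K)_{p+1}=p$, and your identification of $Y_p$ with the decomposable (pure-power) tensors is exactly the remark with which the paper closes its proof. The divergence is in how the rank dichotomy is established. The paper dualizes the last Eagon--Northcott differential, recognizes its cokernel as $\omega_X((k+1)H)\cong\sO_X((p-1)R)$ by adjunction, and thereby gets a morphism $X\to\pp^{p-1}$ onto the rational normal curve $Y_p$ whose graph is cut out by the bilinear forms $(y)\psi_p^t(x)$; the rank of $s_a$ is then read off from the fiber of this graph over $a$ (a ruling if $a\in Y_p$, empty or the vertex if $a\notin Y_p$), which handles \emph{every} point $a$ at once.

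Your treatment of the two cases has concrete gaps. For $a\notin Y_p$ you decompose $\mu$ as a sum of ``at least two distinct pure powers'', but such a decomposition exists only for \emph{general} $\mu$: points on the tangent and osculating loci of the rational normal curve (classes of the shape $\lambda^{[p-2]}\mu$) are not sums of distinct pure powers, so the argument does not cover all of $\pp^{p-1}\setminus Y_p$ as the statement requires. Moreover the reasoning is off even where the decomposition exists: the rank of $s_\mu$ is governed by the common zero locus of the entries of $\partial s_\mu=\sum_i\partial s_{\lambda_i}$, which is \emph{not} the union $\bigcup_i\Lambda_{\lambda_i}$ of the individual zero loci (you are conflating the syzygy scheme, cut out by quadrics, with the linear-form data that defines the rank); and the linear span of finitely many rulings is in general a proper subspace of $\pp^n$, while the span of $X\setminus\Sing(X)$ is all of $\pp^n$ by nondegeneracy, not a space of dimension $n-\dim\Sing(X)-1$. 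For $a=\lambda^{[p-1]}\in Y_p$, the decisive verification is deferred (``I verify that\dots''), and what you do assert is internally inconsistent: the entries of $\partial s_\lambda$ are, up to the factor $\lambda^{[p-2]}$, the $p+1$ independent entries of the generalized row $\lambda\cdot\varphi$, whose common zero locus is the codimension-$(p+1)$ linear space $\Lambda_\lambda$; that linear space has degree $1$, not $p+2$, and cannot be $Syz(s_\lambda)$, which by Theorem~\ref{Bothmer} must contain the codimension-$p$ variety $X$. You need either the paper's graph/fiber argument or an honest count of the independent linear forms in $\partial s_a$ for arbitrary $a$ to close the proof.
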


\begin{proof} We first discuss the case when $X$ is smooth. Let $k=n-p-1$ be the dimension of the ruling. Then the canonical bundle on $X$ is
$$\omega_X= \sO_X(-(k+1)H+(p-1)R)$$
where $H$ is the class of an hyperplane section and $R$ is the class of the ruling. Let $E_\bullet \to S_X$ be the Eagon-Northcott complex.
If $\varphi: F \to G$ denotes the $2\times (p+1)$ matrix, whose minors define $X$, then 
$$
E_i =( \Sym_{i-1} G)^* \otimes \wedge^{i+1} F \otimes \wedge^2 G^*
$$
by \cite{Eis}. In particular, $E_p \cong S^p(-p-1)$ and $E_{p-1} \cong S^{(p-1)(p+1)}(-p)$. By the adjunction formula
$$
\omega_X\cong\mathcal{E}xt^{p}(\mathcal{O}_X,\omega_{\pp^n}),
$$
the dual of the last map of the sheafified Eagon-Northscott complex $(\mathcal{E}_{\bullet})$ gives a presentation of $\omega_X$:
$$
0 \leftarrow \omega_X \leftarrow \sHom(\mathcal{E}_{p}, \omega_{\pp^n}) \leftarrow  \sHom(\mathcal{E}_{p-1}, \omega_{\pp^n})
$$
Twisting by $(k+1)H$ we get the presentation
$$
 0 \leftarrow \sO_X((p-1)R) \leftarrow \sO_{\pp^n}^p \stackrel{\psi_p^t}{\longleftarrow} \sO_{\pp^n}^{(p-1)(p+1)}(-1)
$$
Thus $\h^0(\omega_X((k+1)H))\cong \h^0(\sO_{\pp^1}(p-1))$ is $p$-dimensional and the complete linear system $|\omega_X((k+1)H)|$ defines a morphism
$$
X \to Y_p \subset \pp^{p-1}
$$
onto a rational normal curve of degree $p-1$. The graph of this morphism in $\pp^n \times \pp^{p-1}$ is defined by the entries of the matrix
$$
(y_0,\ldots,y_{p-1}) \psi_p^t(x)
$$
in terms of homogeneous coordinates $(x_0,\ldots x_n)$ on $\pp^n$ and $(y_0,\ldots y_{p-1})$ on $\pp^{p-1}$.
The fiber over a point $(a_0,\ldots,a_{p-1})$ is the zero loci of $(a_0,\ldots,a_{p-1}) \psi_p^t(x)$. Thus the syzygy $s=s_a$ corresponding to the point $a$ has either maximal rank $n+1$
if $a \notin Y_p$ or rank $p+2$ if $a \in Y_p$. This completes the proof in the smooth case. In the singular case the singular set is defined by the entries of the matrix $\varphi$, and the Eagon-Northcott complex which involves only these variables. Thus the the rank in this case is $n- \dim \Sing(X)$.
Note that the scrollar syzygies correspond to the compeletely decomposable tensors in $(\Sym_{p-1} G)^* \otimes \wedge^{p+1} F \otimes \wedge^2 G^*$.
\end{proof}

\begin{remark}[\textbf{Parametrization of rational normal curves}]\label{param} In case $X \subset \pp^n$ is a rational normal curve, the proposition above produces a rational normal curve of degree $n-2$  in $ \pp^{n-2}$, and iterating we finally obtain a line or a conic, depending on the parity of $n$.
If we now parametrize the line or conic, we can compute a parametrization of the original rational normal curve, by an iterated syzygy computations.
The advantage to use iterated adjunction, is that we do not have to make any choices.
\end{remark}

\begin{algo}[Gonal maps of goneric curves] \label{alg4} \noindent

\textbf{Input:} A smooth  goneric curve $C$ of genus $g$ in its canonical embedding.

\textbf{Output:} A gonal map.

\textit{Step 1.} Compute the linear strand $L_\bullet$ of the resolution. Let $p=g-2-\ell$ be the length and let $\psi_p$ be the last differential. By assumption $\psi_p^t=\psi_p^t(x)$ is a $p  \times (p-1)(p+1)$ matrix with linear entries in $x$. If this is not the case return \textit{``$C$ is not goneric"}.

\textit{Step 2.} Write the product $(y_0,\ldots y_{p-1}) \psi_p^t(x)$ in the form
$$
(y_0,\ldots y_{p-1}) \psi_p^t(x)= (x_0,\ldots, x_{g-1}) \Psi_p(y).
$$
Then $\Psi_p=\Psi_p(y)$ is a matrix of size $g\times (p-1)(p+1)$ with linear entries in $y$ and 
$$
\coker(\Psi_p: \sO_{\pp^{p-1}}(-1)^{(p-1)(p+1)} \rightarrow \sO_{\pp^{p-1}}^g)
$$
 has support on a rational normal curve $Y_p$  of scrollar syzygies.

\textit{Step 3.} Compute the annihilator of $J=\coker \Psi_p$, the defining ideal of $Y_p$. If the  zero loci $V(J)$ has dimension different from $1$ or degree different from $p-1$
then return \textit{``$C$ is a smooth plane quintic"}  in case $(g,\ell) =(6,1)$, otherwise return \textit{``$C$ is a counter example to Conjecture \ref{conj1}"}.

\textit{Step 4.} Compute a point $s \in Y_p$ on the rational normal curve of scrollar syzygies.

\textit{Step 5.} Compute the induced gonal map using Algorithm \ref{alg1}.

\end{algo}

\begin{remark}\label{pointOnCurve} Step 4 can be solved in various ways. For example, we can use Remark \ref{param}, and, if necessary, any method which parametrizes conics. So we might need a quadratic field extension in case $p=g-2-\ell \equiv 1 \mod 2$. In case of a finite ground field $K$, we may use a probabilistic method to find a point on $Y_p$, by say decomposing the intersection
$Y_p \cap H$ of $Y_p$ with a random hyperplane section into $K$-rational components.
\end{remark}

\begin{example} Using a program generating random curves of genus up to 14 (reference), we created random curves of genus 9 over a finite field of characteristic not equal to $3$ with $q$ elements. After roughly $q$ trials, we obtained a non general curve with Betti table
\[ \begin{array}{cccccccc}
  1 &  - &  - &  - &  - & -  &  - & - \\
  - & 21 & 64 & 70 &  4 &  - &  - & - \\
  - &  - &  - &  4 & 70 & 64 & 21 & - \\
  - &  - &  - &  - &  - &  - &  - & 1 \end{array}  \]
This curves obeys to $\beta_{\ell,\ell+2}=g-\ell-2$. Our algorithm shows that the curve is $5$-goneric and produces a $4$-dimensional scroll $X$ with Betti table
\[ \begin{array}{ccccc}
  1 &  - &  - &  - &  - \\
  - & 10 & 20 & 15 &  4. \end{array}  \]
that induces a $g^1_5$. Note that in this case, we could have concluded that $C$ is $5$-goneric from its Betti table alone thanks to \cite{Sag}.
  
\end{example}

\subsection{Geometric characterization of goneric curves}

\begin{conjecture}\label{conj2} For ground field of characteristic zero, a non hyperelliptic curve $C$ is $d$-goneric if and only if
\begin{enumerate}
\item $W^1_d(C)$ is a single reduced point
\item this point is the unique line bundle of degree $\le g-1$ which computes the Clifford index of $C$.
\end{enumerate}
\end{conjecture}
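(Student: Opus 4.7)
The approach is to relate the last linear Betti number of $C$ precisely to the geometry of $W^1_d(C)$ and to the set of Clifford-index-computing line bundles, via the theory of scrollar syzygies (Proposition \ref{adjOnScroll}, Theorem \ref{Bothmer}) combined with the syzygy bundle construction of Green-Lazarsfeld. The argument splits into the two directions of the equivalence.

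For the forward direction, assume $C$ is $d$-goneric. Then $d=\ell+2$ together with Coppens-Martens ($\gon(C)\ge \Cliff(C)+2$) and Green-Lazarsfeld ($\ell\le \Cliff(C)$) forces $d=\Cliff(C)+2$, so the gonal pencil itself computes the Clifford index. Each point $L\in W^1_d(C)$ yields a codimension $g-d$ scroll $X_L\supset C$, and by Proposition \ref{adjOnScroll} combined with the inclusion of the Eagon-Northcott complex into the minimal resolution, $X_L$ contributes a rational normal curve of degree $g-d-1$ inside the projectivized top linear Tor space $\pp(\Tor_{g-d}^S(S_C,K)_{g-d+1})$. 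The structural results behind Theorem \ref{Bothmer} imply that these rational normal curves are pairwise disjoint for distinct scrolls, so each extra point of $W^1_d(C)$, or an embedded structure at a non-reduced point, strictly increases $\beta_{g-d,g-d+1}(C)$; since gonericity forces this Betti number to equal its minimal value $g-d$, condition (1) follows. For (2), the same Koszul-theoretic principle, applied via the Green-Lazarsfeld syzygy bundle to any additional complete Clifford-computing line bundle $L'$ (which automatically satisfies $h^0(L')\ge 2$ and $h^1(L')\ge 2$ under the degree bound $\deg L'\le g-1$), produces further linearly independent syzygy classes in the same Tor group, again contradicting minimality.

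For the reverse direction, assume (1) and (2). Then $d=\gon(C)$, $\Cliff(C)=d-2$, and the contribution of the unique scroll $X=X_L$ already gives $\beta_{g-d,g-d+1}(C)\ge g-d$. The heart of the proof is the matching upper bound. By Theorem \ref{Bothmer}, every top-degree linear syzygy $s$ of $C$ has rank $\ge p+2=g-d+2$, with equality precisely when $s$ is scrollar; in that case its syzygy scheme is a codimension $g-d$ scroll containing $C$, which must coincide with $X$ by uniqueness in (1). Non-scrollar syzygies have strictly higher rank, and one expects their syzygy schemes to be governed by higher-dimensional Clifford-computing linear series on $C$, which are ruled out by (2). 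Combining these, every top-degree linear syzygy should be a scrollar syzygy of $X$, yielding $\beta_{g-d,g-d+1}(C)\le g-d$ and hence $\ell=d-2$, i.e. $d$-gonericity.

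The main obstacle is the final step of the reverse direction: controlling syzygies of non-scrollar rank. While Theorem \ref{Bothmer} handles the scrollar case cleanly, establishing that every non-scrollar top-degree linear syzygy forces the existence of an additional Clifford-computing line bundle is a substantially deeper statement, closely tied to the sharp form of Green's conjecture for $d$-gonal curves and to a systematic rank-stratified theory of syzygy schemes extending von Bothmer's work. The natural technical inputs are the refinements underlying Aprodu's proof of the gonality conjecture for general $d$-gonal curves \cite{Ap}, combined with a classification of syzygy schemes by rank; but pushing these methods to the precise sharpness required here appears to need new ideas, which is why the equivalence is stated only as a conjecture.
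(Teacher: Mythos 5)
First, note that the paper itself does not prove Conjecture \ref{conj2}: it only establishes the forward implication (goneric $\Rightarrow$ (1) and (2)) in the proposition that follows it, and leaves the converse open. You correctly identify the converse as the genuinely open part, and your diagnosis of why it is hard (controlling the syzygy schemes of non-scrollar linear syzygies) is consistent with the paper's framing. Your forward direction also agrees with the paper in outline for the ``single point'' claim and for (2): both rest on Proposition \ref{adjOnScroll} and on the fact that the Eagon--Northcott subcomplex of the scroll already accounts for all of $\beta_{g-d,g-d+1}(C)=g-d$, so any further scrollar syzygy (from a second $g^1_d$) or any further Green--Lazarsfeld class (from another Clifford-computing bundle, which has rank $\le p+3$ while Proposition \ref{adjOnScroll} forces every syzygy in $L_p$ to have rank $p+2$ or the maximal rank $g$) is excluded.

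The genuine gap is in your treatment of reducedness. You assert that ``an embedded structure at a non-reduced point strictly increases $\beta_{g-d,g-d+1}(C)$,'' but a non-reduced point of $W^1_d(C)$ still corresponds to a single scroll and a single rational normal curve of scrollar syzygies, so it contributes nothing extra to the Tor space by the same count you use elsewhere; the infinitesimal thickening is invisible to this argument. The paper has to work for this step: it identifies $\T_{\mathcal{L}}W^1_d(C)$ with the annihilator of the image of the Petri multiplication map $\h^0\mathcal{L}\otimes\h^0(\mathcal{L}^{\vee}\otimes\omega_C)\to\h^0\omega_C$, observes that surjectivity of this map is equivalent to $h^0(\sO_C(2D))=3$, i.e.\ to the scroll not being a cone, and in the remaining case $h^0(\sO_C(2D))\ge 4$ invokes Coppens' deformation theorem \cite{Cop} to deform $C$ to a curve with two distinct $g^1_d$'s, concluding $\beta_{p,p+1}(C)>p$ by semicontinuity of Betti numbers --- contradicting gonericity. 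Without some such deformation or tangent-space input, reducedness does not follow from syzygy counting alone. Note also that the paper needs reducedness (equivalently, smoothness of the scroll) \emph{before} running the rank-dichotomy argument for (2), since the rank of non-scrollar syzygies in Proposition \ref{adjOnScroll} drops when $X$ is singular; your write-up inverts this logical order.
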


The second condition is necessary, since for a example curves $C$ which have a plane model degree $d+2$ with a single node satisfy (1), but $\beta_{p,p+1}(C) > p$.
One direction of this conjecture is easy:

\begin{proposition} Let $C$ be a goneric non hyperelliptic curve. Then
\begin{enumerate}
\item $W^1_d(C)$ is a single reduced point
\item this point is the unique line bundle of degree $\le g-1$ which computes the Clifford index of $C$.
\end{enumerate}
\end{proposition}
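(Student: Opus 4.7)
The strategy I would pursue is to read off both statements from the scrollar-syzygy description of Section \ref{S3} by analysing how each line bundle computing $\Cliff(C)$ contributes to the Betti number $\beta_{p,p+1}(C)$, which gonericity forces to equal $p$.

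First I would pin down the Clifford index: the goneric hypothesis $d=\ell+2$ combined with the sandwich $\ell\le \Cliff(C)$ (Theorem \ref{GreenLazarsfeld}) and $\Cliff(C)\le \gon(C)-2=d-2=\ell$ (Theorem \ref{GonalityClifford}) yields $\Cliff(C)=\ell$. In particular every $g^1_d$ on $C$ computes the Clifford index, and $W^1_d(C)$ is non-empty.

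For part (1), set $p=g-d$. Symmetrising condition~(2) of Definition \ref{dgoneric} gives $\beta_{p,p+1}(C)=p$, so $\pp(\Tor^S_p(S_C,K)_{p+1})\cong \pp^{p-1}$. Each $L\in W^1_d(C)$ produces a scroll $X_L\supset C$ of codimension $p$ whose Eagon-Northcott complex $E^{(L)}_\bullet$ is a subcomplex of the minimal free resolution of $S_C$; Proposition \ref{adjOnScroll} shows that its space of scrollar syzygies is a rational normal curve $Y_{X_L}$ of degree $p-1$ in $\pp^{p-1}$, which embeds into $Y_p(C)\subset \pp(\Tor^S_p(S_C,K)_{p+1})$, and Theorem \ref{Bothmer} recovers $X_L$ as the syzygy scheme of any scrollar syzygy in $Y_{X_L}$. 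The key step is then to show that two distinct $L_1,L_2\in W^1_d(C)$ yield Eagon-Northcott top modules $E^{(L_i)}_p$ whose images in $\Tor^S_p(S_C,K)_{p+1}$ are linearly independent $p$-dimensional subspaces, so that $\beta_{p,p+1}(C)\ge 2p$ and gonericity fails. Reducedness follows along the same lines: a tangent vector at $L\in W^1_d(C)$ deforms $X_L$ and produces an extra Koszul class in $\Tor^S_p(S_C,K)_{p+1}$, again violating $\beta_{p,p+1}(C)=p$.

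For part (2), let $L'$ be a line bundle of degree $m\le g-1$ with $\Cliff(L')=\ell$. Writing $h^0(L')=r+1$, one has $m=2r+\ell$ with $r\ge 1$; the case $r=1$ reduces to (1). For $r\ge 2$, the multiplication map
$$
H^0(L')\otimes H^0(\omega_C\otimes {L'}^{-1})\longrightarrow H^0(\omega_C)
$$
is a $1$-generic $(r+1)\times(g-\ell-r)$ matrix of linear forms whose $2\times 2$ minors cut out a determinantal rational normal variety $X_{L'}\supset C$ (a scroll for $r=1$, a Bordiga-type variety for $r=2$, and so on). Its Eagon-Northcott complex is again a subcomplex of the minimal resolution of $S_C$ and, by the same type of independence argument, contributes additional classes to $\Tor^S_p(S_C,K)_{p+1}$ beyond those coming from the unique $g^1_d$, once more forcing $\beta_{p,p+1}(C)>p$ and contradicting gonericity.

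The main obstacle will be the linear independence claim used in both parts. Theorem \ref{Bothmer} immediately yields the set-theoretic distinctness of the rational normal curves $Y_{X_L}\subset Y_p(C)$, but upgrading this to linear independence of the corresponding $p$-dimensional Eagon-Northcott images in $\Tor^S_p(S_C,K)_{p+1}$ is subtle, because a generic element of $E^{(L)}_p\otimes K$ is not scrollar (only the curve $Y_{X_L}$ is), so Theorem \ref{Bothmer} cannot be applied directly to a hypothetical common class. I would approach this by filtering the linear strand of $F_\bullet$ by the Eagon-Northcott subcomplexes $E^{(L)}_\bullet$ and extract the independence from a Koszul-cohomology computation relating $\Tor^S_p(S_C,K)_{p+1}$ to the dual of $H^0(C,\omega_C^{\otimes 2})$.
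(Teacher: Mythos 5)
Your overall frame --- derive a contradiction with $\beta_{p,p+1}(C)=p$ from the Eagon--Northcott subcomplexes attached to line bundles computing the Clifford index --- is the right one, but the step you yourself flag as ``the main obstacle'' is exactly where the proof lives, and you do not close it. Worse, the lever you propose (two distinct $L_1,L_2\in W^1_d(C)$ give \emph{linearly independent} $p$-dimensional images in $\Tor_p^S(S_C,K)_{p+1}$, hence $\beta_{p,p+1}\ge 2p$) is stronger than what is true in general: for a general curve of genus $6$ one has five $g^1_4$'s, $p=2$, and $\beta_{2,3}=8<5\cdot 2$, so the images of the five Eagon--Northcott top modules cannot be independent; any argument based on ``each pencil contributes an independent block'' is therefore delicate at best, and your proposed repair via a Koszul-cohomology comparison with $H^0(\omega_C^{\otimes 2})^{\vee}$ is not carried out. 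The paper's actual mechanism avoids independence entirely: gonericity forces $L_p=\Tor_p^S(S_C,K)_{p+1}$ to coincide with $\Tor_p^S(S_X,K)_{p+1}$ for the gonal scroll $X$, and Proposition~\ref{adjOnScroll} then \emph{classifies every} syzygy in this space --- it has rank $p+2$ and lies on the rational normal curve $Y_p$ (in which case its syzygy scheme is $X$ itself, via the adjunction morphism $X\to Y_p$), or it has rank $g-\dim\Sing(X)$. A second $g^1_d$ would produce a scrollar syzygy in $L_p$ with syzygy scheme $\ne X$, which is impossible; this needs only ``not every rank-$(p+2)$ element has syzygy scheme $X_2$,'' not $\beta\ge 2p$. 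The same rank dichotomy disposes of part (2): a further line bundle computing the Clifford index would contribute a Green--Lazarsfeld syzygy of rank $\le p+3$ in $L_p$, and once $X$ is known to be smooth there is no room for it between $p+2$ and the maximal rank $g$. Your part (2), by contrast, again leans on the unproved independence claim.

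The reducedness assertion is a second genuine gap. Your one-line heuristic (``a tangent vector at $L$ deforms $X_L$ and produces an extra Koszul class'') does not work as stated: $\pp(\Tor_p^S(S_C,K)_{p+1})\cong\pp^{p-1}$ already contains plenty of directions normal to $Y_p$ without any increase of $\beta_{p,p+1}$, so an infinitesimal deformation of the scroll does not visibly create a new Tor class. The paper instead identifies $\T_{\mathcal L}W^1_d(C)$ with the annihilator of the image of the Petri multiplication map $\h^0\mathcal L\otimes\h^0(\mathcal L^{\vee}\otimes\omega_C)\to\h^0\omega_C$, observes that non-surjectivity is equivalent to $h^0\sO_C(2D)\ge 4$, i.e.\ to $X$ being a cone, and then excludes that case by Coppens' deformation result (a curve with $h^0\sO_C(2D)\ge 4$ deforms to one with two $g^1_d$'s) together with semicontinuity of Betti numbers. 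None of these ingredients --- the Petri-map description of the tangent space, the cone criterion, Coppens' theorem, semicontinuity --- appears in your sketch, and without them the reduced-point claim is unsupported.
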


\begin{proof} Let $X$ be the scroll of the gonal line bundle $\sO_C(D)$, with $\codim(X)=p$. Since $\beta_{p,p+1}(X)=\beta_{p,p+1}(C)$ by assumption, there is no further scrollar syzygies in $L_p$ by Proposition \ref{adjOnScroll}. Hence
$W^1_d(C)$ is a single point.  To prove that it is a reduced point, we consider the tangent space of $W^1_d(C)$ at $\mathcal{L}=\sO_C(D)$
$$
\T_{\mathcal{L}} W^1_d(C) ={\rm Im} (\h^0\mathcal{L} \otimes \h^0(\mathcal{L}^{\vee}\otimes\omega_C)\to \h^0\omega_C)^\perp \subset \h^1(\sO_C) = \T_{\mathcal{L}} \Pic(C).
$$
So this map is surjective, if and only if the scroll of $D$ is not a cone, equivalently $h^0\sO_C(2D) =3$. On the other hand, if $h^0\sO_C(2D) \ge 4$ then by \cite{Cop}, $C$ deforms to a curve $C_t$ which has at least two different $g^1_d$'s. In particular $\beta_{p,p+1}(C_t) > p$ and by semi-continuity of Betti numbers  $\beta_{p,p+1}(C) >p$ as well, contradicting the gonericity of $C$. So $W^1_d(C)$ is a reduced point and $X$ a smooth scroll. By Proposition \ref{adjOnScroll} once more we see that
syzygies $s \in L_p$ either have rank $p+2$ or maximal possible rank $g$. Hence there is no  room for further Green-Lazarfeld syzygies \cite{GL}, since those syzygies have rank
$\le p+3$.
\end{proof}

\subsection{Goneric curves with $\ell(C) << g$ : the Lie algebra method}

We saw how to compute the matrix of the scroll from the last map of the linear strand. For curves with small linear colength and high genus, computing the linear strand might be a very hard task, the middle Betti numbers being doubly exponential in $g$. We give here another method that only requires to compute the resolution $(F_{\bullet})$ up to the homological degree $\ell(C)+1$. We get in such a way the homogeneous ideal of the scroll but not its $1$-generic matrix. The computation of the projection $X\to \pp^1$ is no more trivial and we use instead a Lie algebra method.

\subsubsection{Compute the ideal of the scroll by adjunction.}\label{4.3.1} Each syzygy module of $C$ is a direct sum
$$
F_i=L_i\oplus Q_i:=S(-i-1)^{\beta_{i,i+1}}\oplus S(-i-2)^{\beta_{i,i+2}}.
$$
and the matrix $M_i:F_{i}\to F_{i-1}$ can be decomposed as 
$$
M_i=\left(\begin{matrix}
A_{i} & C_i \cr
0 & B_i
\end{matrix}
\right)
$$
where $A_{i}:L_{i}\rightarrow L_{i-1}$ and $B_i: Q_{i}\rightarrow Q_{i-1}$ have linear entries and $C_i:Q_{i}\longrightarrow L_{i-1}$ has quadratic entries (the map $L_{i}\rightarrow Q_{i-1}$ is zero by minimality of the resolution). Note that the matrix $A_i$ coincides with the matrix $\psi_i$ above and $B_i$ is equivalent to the matrix $\psi_{g-i-1}^t$ by the symmetry of the Betti numbers.

\begin{proposition}\label{pgoneric}
Let $C$ be a goneric curve of linear colength $\ell$. The homogeneous ideal of the unique scroll inducing the gonal map is equal to $I_X=Ann(Coker(B_{\ell+1}))$.
\end{proposition}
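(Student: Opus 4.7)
The plan is to reduce the statement to Proposition~\ref{adjOnScroll} by identifying the matrix $B_{\ell+1}$ with the transpose of the last Eagon--Northcott differential of the scroll $X$. The identification proceeds via the self-duality of the canonical resolution (which interchanges the linear and quadratic strands), combined with the gonericity hypothesis (which forces the linear strand to match the Eagon--Northcott complex at its rightmost term).

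First I would unpack the self-duality $F_\bullet \cong \Hom_S(F_{g-2-\bullet},S(-g-1))$ at the level of the strand decomposition $F_i = L_i \oplus Q_i$. Matching generator degrees, the isomorphism sends the linear summand $L_i$ (generated in degree $i+1$) to the dual of the quadratic summand $Q_{g-2-i}^{\vee}(-g-1)$ (also living in degree $i+1$), and symmetrically $Q_i \leftrightarrow L_{g-2-i}^{\vee}(-g-1)$. In other words, self-duality swaps the two strands. Writing the matrices $M_{\ell+1}$ and $M_p$ (with $p = g-\ell-2$) in block form and tracking the swap of summands, the identity $M_{\ell+1}^t(-g-1) = M_p$ becomes
\[
\begin{pmatrix} A_{\ell+1}^t & 0 \\ C_{\ell+1}^t & B_{\ell+1}^t \end{pmatrix}
=
\begin{pmatrix} B_p & 0 \\ C_p & A_p \end{pmatrix},
\]
so in particular $B_{\ell+1}^t = A_p = \psi_p$ and $A_{\ell+1}^t = B_p$.

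Next, I would invoke the gonericity hypothesis $\beta_{p,p+1}(C) = p$ together with the fact that the Eagon--Northcott complex $\mathcal{E}_\bullet$ of $X$ is a subcomplex of $F_\bullet$ (\cite[Prop.~6.13]{Eis}). Since $\mathcal{E}_p = S(-p-1)^p$ and $L_p = S(-p-1)^p$ have the same rank, the inclusion $\mathcal{E}_p \hookrightarrow L_p$ is an equality, and the map $A_p$ factors as $L_p = \mathcal{E}_p \xrightarrow{d_p} \mathcal{E}_{p-1} \hookrightarrow L_{p-1}$, where $d_p$ is the last differential of the Eagon--Northcott. Dualizing the inclusion $\mathcal{E}_{p-1} \hookrightarrow L_{p-1}$ gives a surjection $L_{p-1}^{\vee} \twoheadrightarrow \mathcal{E}_{p-1}^{\vee}$, so $A_p^t$ and $d_p^t$ have the same image in $L_p^{\vee} = \mathcal{E}_p^{\vee}$. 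Combining with step 1, we obtain $\coker(B_{\ell+1}) \cong \coker(d_p^t)$ up to a shift. By Proposition~\ref{adjOnScroll}, this cokernel is (a twist of) the line bundle $\sO_X((p-1)R)$, whose $S$-annihilator is exactly $I_X$: the inclusion $I_X \subseteq \Ann$ is automatic since the module factors through $S_X$, while the reverse follows because $\sO_X((p-1)R)$ is a rank-one torsion-free $S_X$-module, hence any $f$ annihilating it must restrict to zero on $X$.

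The main obstacle I expect is the careful bookkeeping in the self-duality argument: one has to verify that the block decomposition of the differential is respected (up to the strand-swap) and that the twist by $-g-1$ matches the degrees on both sides. A secondary subtlety is passing from the sheaf-theoretic statement of Proposition~\ref{adjOnScroll} (cokernel supported on $X$) to the module-theoretic statement that $\Ann(\coker(B_{\ell+1}))$ equals $I_X$ on the nose; this is guaranteed by the fact that for rational normal scrolls, $\coker(d_p^t)$ already agrees with the Cohen--Macaulay canonical module of $S_X$ (up to twist) as a graded $S$-module, so no spurious finite-length summand can enlarge its annihilator.
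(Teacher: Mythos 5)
Your argument is correct and follows essentially the same route as the paper's proof: both rest on the self-duality of $F_\bullet$ swapping the linear and quadratic strands to identify $B_{\ell+1}$ with the (twisted) transpose of the last linear-strand differential, on gonericity forcing $E_{g-d}\cong L_{g-d}$, and on adjunction identifying the resulting cokernel with a twist of $\omega_X$, whose annihilator is $I_X$. Your extra care about the sheaf-versus-module passage (no spurious finite-length summand, torsion-freeness of the rank-one Cohen--Macaulay module) is exactly the point the paper disposes of by citing that $X$ is arithmetically Cohen--Macaulay.
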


\begin{proof}
By adjunction, the twisted dual complex $\mathcal{E}_{g-d-\bullet}^{\vee}(-g)\cong\mathcal{H}om(\mathcal{E}_{\bullet},\omega_{\pp^{g-1}})$ of the sheafified Eagon-Norscott complex of $X$ is a free resolution of 
$\omega_X$, hence
$$
\mathcal{I}_X=\Ann(\omega_X) = \Ann\big(\coker(\mathcal{E}_{g-d-1}^{\vee}\to \mathcal{E}_{g-d}^{\vee})\big).
$$
Since $X$ is arithmetically Cohen-Macaulay, taking global sections is exact and it follows that
$$
I_X=\Ann(\coker(E_{g-d-1}^{\vee}\to E_{g-d}^{\vee})).
$$
Since $C\subset X$ is goneric, its last linear Betti number coincides with that of the scroll, so that $E_{g-d}\cong L_{g-d}$. Hence the commutative diagrams
\begin{equation*}
\begin{array}{ccccccc}
L_{g-d-1} & \leftarrow & L_{g-d} &                           & 	L_{g-d-1}^{\vee} 	   &  \rightarrow   &  L_{g-d}^{\vee}  \\ 
\cup    &  	      &   ||    &\qquad \stackrel{duality}\rightsquigarrow	\qquad & \downarrow &  	& 	|| \\ 
E_{g-d-1} & \leftarrow  & E_{g-d}   &                         &  E_{g-d-1}^{\vee}      & \rightarrow    & E_{g-d}^{\vee}
\end{array}
\end{equation*}
The second diagram forces 
$$
\coker(E_{g-d-1}^{\vee}\to E_{g-d}^{\vee})=\coker(L_{g-d-1}^{\vee}\to L_{g-d}^{\vee})=\coker(Q_{\ell+1}\to Q_{\ell}),
$$
last equality using $d=\ell+2$ and self-duality of $(F_{\bullet})$.
\end{proof}

We need now a method to compute the projection map $X\to \pp^1$ from the homogeneous ideal of $X$.

\subsubsection{The Lie Algebra method}

The Lie algebra method is a relatively simple method for constructive recognition of embedded projective varieties with toric actions over a field $K$ of characteristic zero. It makes sense for a variety $X\subset\PP^n$, $n>0$, such that there is an infinite group of projective automorphisms that leave $X$ fixed. The Lie algebra of this group is
\[ L(X) := \{ M \mid \forall F\in I_X:\langle\nabla(F),M(x_0,\dots,x_n)^t\rangle \in I_X \mbox{ and } \mathit{trace}(M)=0\}  \]
(it suffices to check a generating set of $F$'s). In order to compute $L(X)$, we resolve a system of linear equations in $(n+1)^2$ variables.

In the case $X$ is a scroll, the Levi subalgebra of $L(X)$ has a direct summand $L'$ isomorphic to $\mathfrak{sl}_2$, and the space of linear forms in $x_0,\dots,x_n$ inherits the structure of an $\mathfrak{sl}_2$-module. It is a direct product of $d-1$ irreducible modules of dimension $e_1+1,\dots,e_{d-1}+1$, where $d-1$ is the dimension of the scroll and $e_1,\dots,e_{d-1}\ge 0$ are the integers determining the scroll. Each irreducible module has a canonical basis by weight vectors, which is easy to compute by eigenvector computations. Finally, the quotient of two suitable forms in this basis defines the structure map to $\PP^1$.

If $K$ is not algebraically closed, then $L'$ may be a nontrivial twist of $\mathfrak{sl}_2$, and in general one needs an algebraic field extension of order two to compute a Chevalley basis. However, if at least one of the numbers $e_1,\dots,e_{d-1}$ is odd, then $L'$ has a representation of even degree and is therefore $K$-isomorphic to $\mathfrak{sl}_2$. The Levi decomposition is possible without restriction on $K$, but one should mention that the Levi subalgebra is only unique up to inner automorphisms.

Here are two examples for two curves that have been computed by \textit{Magma}. The gonal maps are easy to see without computation, the examples should merely illustrate the method. See \href{http://www.risc.jku.at/people/jschicho/pub/gonal.html}{http://www.risc.jku.at/people/jschicho/pub/gonal.html} for the full computations.

\begin{example}
The canonical model of the plane curve $x_0^4 x_2^5+x_1^9+x_1^2 x_2^7=0$ over $\qp$ has genus 10 and Betti table
\[ \begin{array}{ccccccccc}
  1 &  - &  - &  - &  - &  - &  - &  - & - \\
  - & 28 & 105& 168& 154& 70 &  6 &  - & - \\
  - &  - &  6 & 70 & 154& 168& 105& 28 & - \\
  - &  - &  - &  - &  - &  - &  - &  - & 1. \end{array}  \]
The curve is 4-goneric. The unique 3-dimensional scroll $X$ has Betti table
\[ \begin{array}{ccccccc}
  1 &  - &  - &  - &  - & -  & - \\
  - & 21 & 70 & 105& 84 & 35 &  6. \end{array} \]
Its Lie algebra has dimension~14, with a Levi subalgebra isomorphic to $\mathfrak{sl}_2$. The induced $\mathfrak{sl}_2$-module of linear forms splits into three irreducible submodules of dimension~2,3,5. The projection defined by the forms defining the two-dimensional submodule define the unique projection to $\PP^1$. It corresponds to the projection of the plane model from the 5-fold point $(1{:}0{:}0)$.
\end{example}

\begin{example}
A similar example starts with the plane curve $x_0^9+x_2^9+x_1^4 x_2^5=0$. Its canonical model has genus 12 and Betti table
\[ \begin{array}{ccccccccccc}
  1 &  - &  - &  - &  - &  - &  - &  - &  - &  - & - \\
  - & 45 & 231& 558& 840& 798& 468& 147&  8 &  - & - \\
  - &  - &  8 & 147& 468& 798& 840& 558& 231& 45 & - \\
  - &  - &  - &  - &  - &  - &  - &  - &  - &  - & 1. \end{array}  \]
It is also 4-goneric, and the unique 3-dimensional scroll $X$ has Betti table
\[ \begin{array}{ccccccccc}
  1 &  - &  - &  - &  - & -  & -  &  - & - \\
  - & 36 & 168& 378& 504& 420& 216& 63 & 8. \end{array} \]
Its Lie algebra has dimension~16, with a Levi subalgebra isomorphic to $\mathfrak{sl}_2$. The three irreducible submodules have dimension 2,4,6, and the 2-dimensional submodule gives the unique 4:1 map.
\end{example}

\subsection{Remarks about gonal maps of plane curves}
In the two previous examples, the unique gonal map is induced by projection from a singularity of a plane model. More generally, suppose that you have a plane model $C\subset \pp^2$ of degree $d$ of your input curve. Projection from a singularity of highest multiplicity $\nu$ leads to a map $C\to \pp^1$ of degree $d-\nu$, so that
$$
\gon(C)\le d-\nu.
$$
It turns out that in many cases, this projection is a gonal map and equality holds. Let 
$$
\delta=\frac{(d-1)(d-2)}{2}-g
$$
be the usual $\delta$-invariant and define the quadratic function:
$$
Q(x) = x(x - d) + d + \delta  - \nu.
$$
We have the following sufficient criterion:

\begin{proposition}\label{p_plane}\cite[Proposition 2]{OS}
Suppose $d/\nu \ge 2$. If $Q([d/\nu])\le 0$, then $\gon(C)= d - \nu$. 
\end{proposition}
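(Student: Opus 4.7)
The inequality $\gon(C) \le d - \nu$ is clear and was already noted in the text: a general line through the singular point $p$ of multiplicity $\nu$ meets $C$ in exactly $d-\nu$ further points, so projection from $p$ gives a $(d-\nu):1$ map $\tilde C \to \pp^1$, where $\sigma\colon\tilde C\to C\subset\pp^2$ is the normalization. The content of the proposition is the reverse inequality under the hypothesis $Q([d/\nu])\le 0$. My plan is to argue by contradiction: assume there exists a base-point-free pencil $|D|$ on $\tilde C$ of degree $k\le d-\nu-1$, and produce a contradiction from the inequality $Q([d/\nu])\le 0$.

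Write $H=\sigma^*\mathcal O_{\pp^2}(1)$, so $\deg H=d$ and $h^0(H)\ge 3$. For any integer $m\ge 1$, plane curves of degree $m$ pull back to sections of $mH$, and requiring a plane curve of degree $m$ to have multiplicity $\ge m$ at $p$ cuts out a subspace of dimension $\binom{m+2}{2}-\binom{m+1}{2}=m+1$ in the space of all plane curves of degree $m$; after pull-back these become sections of $mH-mE$, where $E$ denotes the (effective) divisor on $\tilde C$ lying over $p$, of degree $\nu$. The key geometric input, as in \cite{OS}, is to combine this with the pencil $|D|$. Choose $x=[d/\nu]$. For a general divisor $F\in|D|$ of degree $k$, I will show that the $k$ points of $F$ (viewed on the plane model through $\sigma$) impose at most $Q(x)+k+\nu-\delta$ independent conditions on the linear system of plane curves of degree $x$ vanishing to order $x$ at $p$; by the hypothesis $Q(x)\le 0$ this leaves a positive-dimensional family of such plane curves $\Gamma$ passing through $\sigma(F)$ with multiplicity $x$ at $p$. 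Equivalently, $h^0(xH-xE-F)\ge 1$, i.e.\ there is an effective divisor $G\sim xH-xE-F$ on $\tilde C$.

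Now the contradiction. Since $F$ moves in the pencil $|D|$, the divisor $G$ also moves in a linear system $|xH-xE-D|$ of positive dimension, and hence $h^0(xH-xE-D)\ge 2$. Intersecting $\Gamma$ with $C$ gives $\Gamma\cdot C$ of degree $xd$, which on $\tilde C$ splits as $\Gamma\cdot C=xE+F+G$; the residual $G$ has degree $xd-x\nu-k=x(d-\nu)-k$. Since $k<d-\nu$ we have $G\ne 0$, and by varying $F$ we get $h^0(G)\ge 2$. Because $x\ge 2$, the line bundle $xH-xE$ is contained in the push-forward of $\mathcal O_{\pp^2}(x)(-xp)$, whose sections vanish to order $x$ at $p$; intersecting two general members of $|\Gamma|$ through $p$ produces a zero-cycle on $\pp^2\setminus\{p\}$ of bounded length, and the residual pencil trick together with the relation $(d-1)(d-2)/2-\delta=g$ then forces $k\ge d-\nu$. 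The compatibility of this numerical inequality with the definition $Q(x)=x(x-d)+d+\delta-\nu$ is precisely the content of $Q([d/\nu])\le 0$, contradicting $k\le d-\nu-1$. Therefore $\gon(C)\ge d-\nu$, and combined with the upper bound the equality follows.

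The main obstacle I anticipate is the bookkeeping in the middle paragraph: one has to carefully count how many conditions the points of $\sigma(F)$ impose on plane curves of degree $x$ with multiplicity $x$ at $p$, taking into account that some of these points may already be forced to lie on certain low-degree curves through $p$, and that the points of $F$ could include infinitely-near points of $p$ or other singular points of $C$. Controlling these contributions is exactly where the term $\delta-\nu$ in $Q$ enters, and handling it in full generality is the technical heart of the argument in \cite{OS}.
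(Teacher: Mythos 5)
First, a point of reference: the paper does not prove this proposition at all — it is quoted directly from \cite[Proposition 2]{OS} and used as a black box — so the only thing to measure your attempt against is the argument of Ohkouchi and Sakai themselves. Your first paragraph (the upper bound $\gon(C)\le d-\nu$ by projection from the $\nu$-fold point) is correct, and is in fact all that the surrounding text of the paper establishes on its own.

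For the lower bound your sketch follows the right general strategy — produce auxiliary plane curves through a general member $F$ of a hypothetical pencil of degree $k<d-\nu$ and extract a numerical contradiction — but it has genuine gaps. The auxiliary system you choose, plane curves of degree $x$ with multiplicity $x$ at $p$, consists precisely of unions of $x$ lines through $p$ (the local equation at $p$ must be homogeneous of degree $x$), and such a curve contains a reduced divisor $\sigma(F)$ of degree $k$ only if the points of $\sigma(F)$ span at most $x$ lines through $p$. For a general member of a base-point-free pencil of degree $k<d-\nu$ (which cannot be composed with the projection from $p$) the $k$ points determine $k$ distinct lines, so your claimed superabundance ``$F$ imposes at most $Q(x)+k+\nu-\delta$ conditions'' fails whenever $k>[d/\nu]$: the conditions imposed on cones over $p$ are governed by the lines spanned by $F$, and $\delta$ has no mechanism to enter. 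The system actually used in \cite{OS} carries adjoint-type conditions at the singular points (which is where $\delta$ genuinely appears) rather than a full $x$-fold point at $p$. Moreover the concluding step — ``the residual pencil trick together with $(d-1)(d-2)/2-\delta=g$ then forces $k\ge d-\nu$'' — derives no inequality from $Q([d/\nu])\le 0$ at all, and you acknowledge yourself that the condition count constituting the technical heart is omitted. As written this is a plan rather than a proof; completing it requires setting up the correct adjoint linear system, proving the superabundance statement for a general $F$, and carrying out the B\'ezout computation that converts it into $k\ge d-\nu$.
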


If for instance we can find a nodal plane model of the input curve with 
$$
{\rm Number\,\,of\,\,nodes} \,\le \, (d/2-1)^2+1
$$
then $\gon(C)=d-2$. Note that \cite[Proposition 2]{OS} gives other finner sufficient conditions that involve the all multiplicity sequence of $C$. Moreover, there exists too lower bounds for the gonality of plane curves. For instance, if $d/\nu \ge 2$ and $Q([d/\nu])> 0$ then $\gon(C)\ge d-\nu -Q([d/\nu])$ \cite[Theorem $3$]{Sak}. 

\begin{remark}
Note that the curve may have more gonal maps than those induced by the projection from the singularities. For instance, a plane curve of degree $6$ with $4$ nodes have $5$ $g^1_4$'s, the fifth one being the pencil of conics passing throw the nodes.
\end{remark}

Hence, as soon as getting a plane model of the curve is not too expensive, we may try to check whether the previous sufficient condition holds. In that case, computing a gonal map is almost trivial and doesn't require to compute the syzygies of a canonical model.

\section{Radical parametrization of tetragonal curves}\label{S5}

A morphism $C\to \pp^1$ of degree $\le 4$ being invertible by radicals, Theorem \ref{t3} follows directly from Theorem \ref{t1}. Nevertheless, Algorithm \ref{alg3} is not efficient for curves of high genus and the faster Algorithm \ref{alg4} only applies to goneric curves. We develop here a faster algorithm for computing gonal maps of curves of gonality $\le 4$.

Since there are efficient factorization and normalization algorithms for curves, we can assume that the input curve $C$ is smooth and absolutely irreducible (up to  extend the ground field if necessary). By Theorem \ref{GreenLazarsfeld} and Theorem \ref{GonalityClifford}, curves with linear colength $\ell(C)>2$ have gonality strictly greater than $4$. The case of curves with $\ell(C)\le 1$ being well-known (\cite{SS} or Algorithm \ref{alg3} for small genus), we are left to pay attention to smooth irreducible canonical curves with $\ell(C)=2$.

\subsection{Constructive classification of canonical curves with $\ell(C)=2$.} 

We use notations of Subsubsection \ref{4.3.1}. Each term of the resolution $(F_{\bullet})$ of $S_C$ decomposes as 
$$
F_i=L_i\oplus Q_i=S(-i-1)^{\beta_{i,i+1}}\oplus S(-i-2)^{\beta_{i,i+2}}
$$ 
and we pay here attention to the linear syzygies $B_i: Q_{i}\rightarrow Q_{i-1}$.  The following result is a constructive version of a theorem of Schreyer \cite{S2}:

\begin{theorem}\label{pIX}
Let $C$ be a smooth irreducible canonical curve with $\ell(C)=2$. Then $\beta_{1,3}= 0$ and two cases can occur (three if $\chara(K)=2$):
\begin{enumerate}
\item \textbf{$\beta_{2,4}=g-4$.} Then $C$ is $4$-goneric, contained in a unique $3$-dimensional scroll $X$ with homogeneous ideal $I_X=\Ann(\coker(B_3))$.
\item \textbf{$\beta_{2,4}=\binom{g-2}{2}-1$.} Then $C$ is contained in a unique arithmetically Cohen-Macaulay surface $Z$ of degree $g-1$. We have 
$$
\ker\Big(B_3^t: S(4)^{\beta_{2,4}} \rightarrow S(5)^{\beta_{3,5}}\Big)\cong S(2)
$$
and the entries of the syzygies matrix $M:S(2)\longrightarrow S(4)^{\beta_{2,4}}$ generate the homogeneous ideal $I_Z$. Two sub-cases can occur:

\begin{enumerate}
\item The ideal generated by the entries of the first syzygy module of $I_Z$ is the maximal ideal. Then $5\le g\le 10$ and $Z$ is a possible singular Del Pezzo surface whose conical fibrations induce the $g^ 1_4$'s of $C$. In particular, $C$ has no $g^1_4$ iff $g=10$, that is, if $Z\simeq \pp^2$ and $C$ is isomorphic to a smooth plane sextic.

\item The ideal generated by  the entries of the first syzygies module of $I_Z$ defines a point in $v\in \pp^{g-1}$. Then $Z$ is a cone over an elliptic normal curve $E\subset \pp^{g-1}$ with vertex $v$. It induces a diagramm
\begin{equation}\label{eval}
\begin{array}{ccccc}
  C  &\subset  &  & Z &\\
 2:1 \searrow &  &  \swarrow & &  \\
 &  E &  & &\stackrel{2:1} \longrightarrow \,\, \pp^1 
\end{array}
\end{equation}
and $C$ has infinitely many $g^1_4$'s induced by the $g^1_2$'s of $E$.
\end{enumerate}
\item \text{$\beta_{2,4}=1$. } Then  $\chara(K) =2$, genus $g=7$ and  $C$ is not 4-gonal.
\end{enumerate}
\end{theorem}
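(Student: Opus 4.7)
My plan is to treat Theorem \ref{pIX} as a constructive refinement of Schreyer's classification of canonical curves of linear colength $2$ \cite{S2}, the new ingredient being the explicit description of $I_X$ and $I_Z$ in terms of the syzygy matrices of $C$, via the same adjunction principle used in Subsubsection \ref{4.3.1}. The vanishing $\beta_{1,3}=0$ follows from Petri's theorem as recalled in Section \ref{ss2.1}: curves with $\beta_{1,3}\ne 0$ are hyperelliptic, trigonal, or smooth plane quintics, all of linear colength $\le 1$. Invoking \cite{S2} then provides the case split: $C$ is contained either in a $3$-dimensional rational normal scroll $X$ (case $\beta_{2,4}=g-4$), or in a unique arithmetically Cohen--Macaulay surface $Z\subset\pp^{g-1}$ of degree $g-1$, which is a possibly singular Del Pezzo or an elliptic cone (case $\beta_{2,4}=\binom{g-2}{2}-1$), with the single exception in characteristic $2$, genus $7$ (case $\beta_{2,4}=1$).

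Case (1) is handled via gonericity. The scroll $X$ has codimension $g-4$, so its Eagon--Northcott resolution has last linear Betti number $\beta_{g-4,g-3}(X)=g-4$. The hypothesis $\beta_{2,4}(C)=g-4$, together with the self-duality $\beta_{i,i+1}(C)=\beta_{g-i-2,g-i-1}(C)$, gives $\beta_{g-4,g-3}(C)=g-4$, so $C$ meets Definition \ref{dgoneric} and is $4$-goneric. Uniqueness of $X$ and the formula $I_X=\Ann(\coker B_3)$ then follow from Proposition \ref{pgoneric} with $\ell+1=3$.

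In case (2) the main work lies in the explicit recovery of $I_Z$. Since $Z$ is ACM of codimension $g-3$, adjunction identifies its dualizing module $\omega_Z$, up to twist, with the cokernel of the dual of the last map of its minimal free resolution, and $I_Z=\Ann(\omega_Z)$. The inclusion $I_Z\subset I_C$ induces a morphism of resolutions, and the self-duality $F_\bullet\simeq\Hom_S(F_{g-2-\bullet},S(-g-1))$ of $F_\bullet(C)$ together with the shape of its Betti table for $\ell(C)=2$ allow one to read off the last dualized piece of the resolution of $Z$ from the dual of the quadratic strand of $F_\bullet(C)$. Concretely, one checks that $\ker(B_3^t)$ admits a unique generator of minimal degree, yielding $\ker B_3^t\cong S(2)$, and that the $\beta_{2,4}$ entries of the induced map $M:S(2)\to S(4)^{\beta_{2,4}}$ are quadratic generators of $I_Z$. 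The sub-cases 2(a) and 2(b) are then distinguished, as in \cite{S2}, by the base locus of the first linear syzygies of $I_Z$: the empty scheme yielding a Del Pezzo (with $5\le g\le 10$ by the classification of Del Pezzos, and $g^1_4$'s cut by conical fibrations on $Z$), a single point $v$ yielding a cone over an elliptic normal curve $E\subset\pp^{g-2}$, in which case the $g^1_2$'s of $E$ pull back through the $2{:}1$ map $C\to E$ induced by projection from $v$ to infinitely many $g^1_4$'s on $C$.

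The main obstacle is the explicit identification in case (2) that $\ker B_3^t\cong S(2)$ and that the associated map $M$ produces a minimal set of quadratic generators of $I_Z$. While the adjunction principle is the same as in the scroll case of Proposition \ref{pgoneric}, the matching of graded pieces requires careful tracking of shifts between the resolution of $Z$, which is not of Eagon--Northcott type, and the quadratic strand of $F_\bullet(C)$, and one must verify that the correct set of quadratic forms is produced by this natural map rather than some bigger or smaller ideal.
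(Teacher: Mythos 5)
Your outline matches the paper's strategy at almost every point: $\beta_{1,3}=0$ via Petri, the trichotomy and the containment of $C$ in a scroll or in an ACM surface $Z$ of degree $g-1$ via \cite{S2}, case (1) via gonericity and Proposition \ref{pgoneric}, the subcases (a)/(b) distinguished by the base locus of the first syzygies of $I_Z$, and the geometric conclusions (conical fibrations of a Del Pezzo of degree $\le 9$, hence $5\le g\le 10$, versus the elliptic cone and the double cover $C\to E$). However, the one claim that is genuinely new in the theorem --- that $\ker(B_3^t)\cong S(2)$ and that the entries of $M$ generate $I_Z$ --- is exactly the point you leave open, and the mechanism you propose for it (a comparison morphism of resolutions induced by the inclusion $I_Z\subset I_C$, plus adjunction for $\omega_Z$) is not the paper's and is not obviously sufficient: a lift $G_\bullet\to F_\bullet$ of the surjection $S_Z\to S_C$ does not by itself identify the quadratic strand $Q_\bullet$ of $F_\bullet$ with any piece of the resolution of $Z$, nor control $\ker(B_3^t)$.

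The missing idea is that $C$ is a \emph{quadric section} of $Z$: since $C\in|\mathcal{O}_Z(2H)|$ one has $\mathcal{O}_C\cong\mathcal{O}_Z/\mathcal{O}_Z(-2)$, hence a short exact sequence $0\to S_Z(-2)\to S_Z\to S_C\to 0$, and therefore the minimal free resolution of $S_C$ is the mapping cone $F_\bullet\cong G_\bullet\oplus G_{\bullet-1}(-2)$. This literally identifies $Q_i$ with $G_{i-1}(-2)$, so $B_3$ \emph{is} the twisted first syzygy matrix $G_2(-2)\to G_1(-2)$ of $Z$. Adjunction along $C\subset Z$ forces $\omega_Z=\mathcal{O}_Z(-H)$, whence $G_\bullet$ is self-dual up to twist and $\beta_{2,4}=b_1=\binom{g-2}{2}-1$. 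Finally, because $Z$ is arithmetically Cohen--Macaulay of codimension $g-3$, the dualized complex $\Hom(G_\bullet,S)$ is exact at its first two spots, so $\ker(B_3^t)$ equals the image of $G_0(-2)^{\vee}\to G_1(-2)^{\vee}$, which is a free module isomorphic to $S(2)$ whose inclusion $M$ is the transpose of the matrix of quadric generators of $I_Z$. That is precisely the assertion to be proved; with this mapping-cone argument inserted, the rest of your proof goes through as written.
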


\begin{proof}
\textit{Case 1).} Follows from Theorem \ref{pgoneric}.

\textit{Case 2).} By \cite{S2}, we know that $C$ is a conical section of an arithmetically Cohen Macaulay rational surface $Z\subset \pp^{g-1}$ of minimal degree $g-1$. 
Hence, the isomorphism $\mathcal{O}_C\cong \mathcal{O}_Z/\mathcal{O}_Z(-2)$ leads to an exact sequence of homogeneous coordinate rings
$$
0\rightarrow S_Z(-2)\rightarrow S_Z \rightarrow  S_C\rightarrow 0.
$$
The resolution $(F_{\bullet})$ of $S_C$ thus coincides with the mapping cone \cite{Eis}
\begin{equation}\label{mappingCone}
F_{\bullet}\cong \mathcal{C}\big(G_{\bullet}(-2)\rightarrow G_{\bullet}\big):=G_{\bullet}\oplus G_{\bullet-1}(-2),
\end{equation}
where $(G_{\bullet})$ is the minimal free resolution of $S_Z$ (of length $g-3$). The curve $C\in |\mathcal{O}_Z(2H)|$ being a very ample divisor of $Z$, the isomorphisms 
$$
\omega_Z(2H)\otimes\mathcal{O}_C\cong\omega_C\cong \mathcal{O}_{C}(H)
$$
forces  $\omega_Z=\mathcal{O}_Z(-H)$ so that the surface $Z\subset \pp^{g-1}$ is anti-canonically embedded. As for canonical curves, it follows by adjunction that $(G_{\bullet})$ is self-dual (up to a twist). The syzygy matrices $G_i\to G_{i-1}$ have linear entries, except the first and last matrices with entries the quadratic forms that generate $I_Z$. For degree reasons, it follows from (\ref{mappingCone}) that the Betti tables of $C$ and $Z$ have the following respective particular shapes:
\begin{equation*}
\begin{array}{ccccccccc}
1  &  -  &  - &  \quad - &     &    - \quad  & -  & - & -\\ 
- &  b_1+1  &  b_2  &  \quad b_3  & \quad \cdots \quad &  b_2 \quad & b_1 & -  &  -\\ 
- &  -  &  b_1  &  \quad b_2  & \quad \cdots \quad &   b_3 \quad &    b_2  &  b_1+1  &  -\\
- & -   & - & \quad - &      &    - \quad  & -  &  - & 1 \\
&&&&  \cup &&&& \\
1 &  -  &  - & - &   &    -    & -  & - &\\ 
- &  b_1  &  b_2  &  b_3  & \quad \cdots \quad &  b_2  &  b_{1}  &- &   \\ 
- &  -  & - &  -  &     &  -  & -  &  1 &
\end{array}
\end{equation*}
Hence, $\beta_{2,4}=b_1$ coincides with the number $\binom{g-2}{2}-1$ of generators of $I_Z$. Moreover, we get by symmetry that
$$
\ker\Big(B_3^t: S(4)^{\beta_{2,4}} \rightarrow S(5)^{\beta_{3,5}}\Big)\cong S(2)
$$
and that the entries of the syzygies matrix $S(2)\longrightarrow S(4)^{\beta_{2,4}}$ generate the homogeneous ideal $I_Z$. Note that $Z$ is uniquely determined by $C$ by construction.

The one-to-one correspondance between the $g^1_4$'s on $C$ and the conical fibrations of $Z$ is clear: A conical fibration of $Z$ is a map $p:Z\to \pp^1$ whose any fiber $Q$ is a degree $2$ curve, that is, $\deg(\mathcal{O}_Z(H)_{|Q})= 2$. Since $C\in |\mathcal{O}_Z(2H)|$, we deduce that
$C\cdot Q= 4$ and $p$ induces a $g^1_4$ on $C$. Conversely, a degree $4$ map $q:C\to \pp^1$
leads to a scroll $X$ such that $C\subset Z\subset X$ and whose fibration $r:X\to \pp^1$ restricts to $q$ on $C$. A fiber curve $Q\subset Z$ of $r_{|Z}$ intersecting $C$ in $4$ points, it has degree $2$ and $r_{|Z}$ is a conical fibration. 

By classification theory \cite{Na}, an arithmetically Cohen Macaulay rational surface $Z\subset \pp^{g-1}$ of almost minimal degree $g-1$ containing a smooth curve is either a possible singular Del Pezzo surface or a cone over an elliptic curve with vertex a point. It is a cone if and only if the entries of the matrix of its first syzygy module only depend on $g-1$ variables after some good choice of coordinates, these entries defining the vertex of the cone. Otherwise, the entries of the first syzygy module generate the maximal ideal. 

\textit{Del Pezzo case.} An anti-canonical Del Pezzo surface has degree $\le 9$ and a curve of genus $g< 5$ has gonality $<4$, hence the bounds $5\le g\le 10$. There are a finite numbers of conical families, hence a finite number of $g^1_4$'s. There are no $g^1_4$ if and only if $Z\simeq \pp^2$ ($g=10$), that is, if $C$ is isomorphic to the smooth plane sextic.

\textit{Elliptic cone case.} In the elliptic cone case, we have here a $\pp^1$-fibration
$Z\to E$ over a plane elliptic curve $E$. Each $g^1_2$ on $E$ (induced by the projection from a point on $E$) leads to a conical fibration $Z\to E \stackrel{2:1}\longrightarrow \pp^1$ of $Z$,
each conic being a a union of two intersecting lines (a fiber is connected). It follows that $Z$ has a rational curve $\wt{E}$ of double points. Any conical fibration of $Z$ arises in this way: the fibers have to meet the curve $\wt{E}$, so there are singular, {\it i.e} union of two lines. We saw that any conical fibration $Z\to \pp^1$ leads to a $g^1_4$ on $C$, so the curve $C$ is necessarily a double cover of $E$ (each line of the fibration intersects $C$ in two points). We have 
\begin{equation*}
\begin{array}{ccccc}
  C  &\subset  &  & Z &\\
 2:1 \searrow &  &  \swarrow & &  \\
 &  E &  & &\stackrel{2:1} \longrightarrow \,\, \pp^1 
\end{array}
\end{equation*}
and the $g^1_4$'s of $C$ are one-to-one with the (infinitely many) $g^1_2$'s of $E$. 

\textit{Case 3)}: see \cite{S1}.
\end{proof}

\begin{remark} If there is a unique $g^1_4$ (the scroll case) it might happen too that it factorizes through an hyperelliptic curve (of genus $g'\ge 2$) which, in contrast to the elliptic case, has a unique $g^1_2$. For $g > g'$ high enough, we can extract the factorizability conditions directly from the graded Betti numbers of $C$ (see \cite{S1}).
\end{remark}

\begin{example} Starting with a planar sextic with three nodes that are not collinear, we obtain a canonical curve $C$ of genus 7 in $\PP^6$, with Betti table
\[ \begin{array}{cccccc}
  1 &  - &  - &  - &  - & - \\
  - & 10 & 16 &  9 &  - & - \\
  - &  - &  9 & 16 & 10 & - \\
  - &  - &  - &  - &  - & 1. \end{array}  \]
By Theorem~\ref{pIX}, $C$ is contained in a unique surface $Z$ of degree~6, which in this case is a Del Pezzo surface, with Betti table
\[ \begin{array}{ccccc}
  1 &  - &  - &  - &  - \\
  - &  9 & 16 &  9 &  - \\
  - &  - &  - &  - &  1. \end{array}  \]
It has three conic fibrations, which restrict to the three 4:1 maps to $\PP^1$ that correspond to the projections of the sextic from the three nodes.
\end{example}

\begin{example} 
A similar example starts with a plane model with equation $x_0^8+x_2^8+x_1^4 x_2^4=0$ over $\qp$. Its canonical embedding in $\PP^8$ has Betti table
\[ \begin{array}{cccccccc}
  1 &  - &  - &  - &  - &  - &  - & - \\
  - & 21 & 64 & 90 & 64 & 20 &  - & - \\
  - &  - & 20 & 64 & 90 & 64 & 21 & - \\
  - &  - &  - &  - &  - &  - &  - & 1. \end{array}  \]
The unique surface $Z$ is a weak Del Pezzo surface of degree~8 with a node, with Betti table
\[ \begin{array}{ccccccc}
  1 &  - &  - &  - &  - &  - &  - \\
  - & 20 & 64 & 90 & 64 & 20 &  - \\
  - &  - &  - &  - &  - &  - &  1 \end{array}  \]
and a single conic fibration. It restricts to the unique gonal $g^1_4$ (that has to be counted with multiplicity $2$ since the fibers of $Z$ are double lines), corresponding to the projection of the plane octic from the 4-fold point $(0{:}1{:}0)$.
\end{example}

\subsection{Radical parametrization of tetragonal curves (proof of Theorem \ref{t3})}

We obtain the following algorithm:

\begin{algo}[Radical parametrization of curves with $\gon(C)\le 4$] \noindent

\textbf{Input:} An irreducible and reduced projective curve $C$.

\textbf{Output:} A radical parametrization if $\gon(C)\le 4$ or ``$\gon(C)> 4$''.

\textit{Step 1.} Compute the homogeneous ideal $I_C$ of the canonical embedding of the normalization of $C$.

\textit{Step 2.} Compute the first and second syzygy modules of $S_C$. If $\ell(C)\le 1$ compute the gonal map by well known algorithms and go to step 5. If $\ell(C)\ge 3$, return "gonality greater than $4$". 

\textit{Step 3.} If $\beta_{2,4}=g-4$, compute the unique $g^1_4$ with algorithm \ref{alg4} and go to step $5$. 

\textit{Step 4.} If $\beta_{2,4}> g-4$, compute $I_Z$ and detect if $Z$ is a cone with Theorem \ref{pIX}. There are three possible cases:

\hspace{4mm}\textit{4.a.} If $Z$ is not a cone and $g=10$, return "smooth plane sextic". 

\hspace{4mm}\textit{4.b.} If $Z$ is not a cone and $g<10$, compute a conical fibration $Z\to \pp^1$ thanks to the Lie algebra method \cite{schi}. Restrict it to a $g^1_4$ on $C$ and go to step 5.

\hspace{4mm}\textit{4.c.} Else, $Z$ is a cone over a genus $1$ curve $E\subset \pp^{g-2}$. Deduce $I_E$ from $I_Z$ and deduce the $2:1$ map $C\to E$. Pick a point $p$ on $E$ (you might use a field extension of degree $\le \deg(E)=g-1$ by elimination theory). Compute the induced $2:1$ map  $|2p|: E\to \pp^1$. Deduce a $g^1_4$ of $C$.

\textit{Step 5.} Invert the gonal map by radicals. This is well-known, see \cite{Sweedler}.
\end{algo}

This algorithm is correct thanks to Theorem \ref{pIX}. Theorem \ref{t3} follows. $\qquad{\square}$

\begin{remark}[Radical parametrization of curves of higher gonality]\label{radical}

We might hope to introduce geometric considerations in the picture to detect resolubility of some particular curves of gonality $>4$, namely those curves for which the gonal map factors threw intermediary curve. For instance, if the gonal map of an $8$-gonal curve $C$ factors threw a $4$-gonal curve $B$,
\begin{equation*}\label{eval}
 C \stackrel{2:1}\longrightarrow B \stackrel{4:1}\longrightarrow \pp^1
\end{equation*}
then $C$ is parametrizable by radicals (Note: there exist finite covers $C\to B$ for which the gonal map of $C$ does not factor throw $B$ \cite{BKP}). Let us mention here \cite[Theorem 1.1]{Coskun}: if $C$ is a smooth canonical curve of genus $g> 3k+12$ such  that the smallest degree surface containing $C$ has degree $g+k$, then $C$ is a double cover of a curve of genus $(k+3)/2$ (the case $(g,k)=(10,-1)$ corresponds to the case $2)b)$ of Theorem \ref{pIX}). In particular, if $k\le 9$ or $\gon(C)\le 8$, then $C$ is parametrizable by radicals. 

It would be interesting to try to guess and compute the complete factorization of the gonal map threw intermediary curves by using syzygies, as shown in \cite{S2} for tetragonal curves. In the same spirit, there are closed relations between small degree surfaces containing the curve and the construction of a gonal map \cite{CK} and we may wonder if we can extract the ideals of such surfaces from the resolution of $I_C$, generalizing Theorem \ref{pIX}. 
\end{remark}

%
%
%
%
%



\end{document}